\theoremstyle{thmstyleone}%
\newtheorem{theorem}{Theorem}
\newtheorem{proposition}[theorem]{Proposition}%
\theoremstyle{thmstyletwo}%
\theoremstyle{thmstylethree}%
\newtheorem{lemma}[theorem]{Lemma}
\newtheorem{assumptionA}{Assumption}
\newtheorem{assumptionB}{Assumption}
\DeclareMathOperator*{\argmin}{arg\,min}
\newcommand{\bR}{{\mathbb R}}
\newcommand{\cC}{\mathcal{C}}
\newcommand{\cL}{\mathcal L}
\newcommand{\cO}{\mathcal O}
\newcommand{\cH}{\mathcal H}
\begin{document}

\title[Regrets of  PMM  for Online Non-convex Optimization]{Regrets of  Proximal Method of Multipliers  for Online Non-convex Optimization with Long Term Constraints}


\author[1,2]{\fnm{Liwei} \sur{Zhang}}\email{lwzhang@dlut.edu.cn}

\author[1]{\fnm{Haoyang} \sur{Liu}}\email{hyliu@mail.dlut.edu.cn}

\author*[1,2]{\fnm{Xiantao} \sur{Xiao}}\email{xtxiao@dlut.edu.cn}

\affil*[1]{\orgdiv{School of Mathematical Sciences}, \orgname{Dalian University
of Technology}, \orgaddress{ \city{Dalian}, \postcode{116024},  \country{China}}}

\affil[2]{ \orgname{Key Laboratory for Computational Mathematics and Data Intelligence of Liaoning Province}, \orgaddress{ \city{Dalian}, \postcode{116024},  \country{China}}}


\abstract{The online optimization problem with non-convex loss functions over  a closed convex set, coupled with a set of  inequality  (possibly non-convex) constraints is a challenging online learning problem. A   proximal method of multipliers with quadratic approximations  (named as OPMM) is presented to solve this online non-convex  optimization with long term constraints.
 Regrets of the violation of Karush-Kuhn-Tucker conditions of OPMM for solving online non-convex optimization problems are analyzed. Under mild conditions, it is shown  that this algorithm exhibits  ${\cO}(T^{-1/8})$ Lagrangian gradient violation  regret, ${\cO}(T^{-1/8})$ constraint violation regret   and ${\cO}(T^{-1/4})$  complementarity residual regret if parameters in the algorithm are properly chosen,  where $T$ denotes the number of time periods. For the case that the objective is a convex quadratic function, we demonstrate that the regret of the objective reduction can be established even the feasible set is non-convex. For the case when the constraint functions are convex, if  the solution of the subproblem in OPMM is obtained  by solving its dual,  OPMM  is  proved to be an implementable  projection  method for solving the online  non-convex optimization problem.}

\keywords{Online Non-convex Optimization, Proximal  Method of Multipliers with Quadratic Approximations, Lagrangian Gradient Violation  Regret, Constraint Violation Regret, Complementarity Residual Regret}



\maketitle

\section{Introduction}\label{sec:intro}
In recent years, a number of efficient algorithms have
been developed for online optimization. Convexity of the loss functions and the constraint sets has played a central
role in the development of many of these algorithms. In this paper, we consider a more
general setting, where the sequence of loss functions encountered by the learner could be
non-convex and the constraint set is defined by a set of (possibly non-convex) inequalities.
 Such a setting has various applications in machine learning \cite{MSF2017,NPM2019,CJGWNYS2019}, especially in
adversarial training \cite{SZSEEGF2014} and training of Generative
Adversarial Networks (GANs) \cite{Goodfellow2014}.

Most of the existing works about online optimization have focused
on convex loss functions. A number of computationally efficient approaches
have been proposed for regret minimization in this setting. Among them the famous ones include Follow-the-leader \cite{Kalai2005},  Follow-the-Regularized-Leader  \cite{Shai2007a,Shai2007b},  Exponentiated Online Gradient \cite{Kiv1997}, Online Mirror Descent, Perceptron \cite{Rosenblatt1958} and Winnow \cite{Littlestone1988}. There are also a  lot of publications concerning algorithms for online convex optimization, see \cite[Chapter 7]{MRTal2012}, \cite[Chapter 21]{SSS2014}, and  survey papers  \cite{Shai2011,Hazan2015} and references cited in these two papers.

However, when the loss functions are
non-convex or the constraint sets are non-convex, minimizing the regret is computationally prohibitive. In the last years, there have been several papers about learning with
non-convex losses over simple convex constraint sets. A few heuristic algorithms have been
proposed in \cite{EBG2011,GPSB2011}
without  establishing the regret bounds.
In \cite{GaoZZhang2018},  the regret of online projection gradient method for a restricted
class of loss functions is analyzed.
 The notion of local regret and  the regret of online gradient method for a class of continuously differentiable non-convex loss  functions are presented in \cite{HSZ2017}. DC (difference of convex functions) programming and
DCA method for online learning problems with non-convex loss functions are investigated in \cite{LH2020}. In \cite{YDHTW2018}, a recursive exponential weighted algorithm that attains a regret of ${\cO}(T^{-1/2})$ for non-convex Lipschitz continuous loss functions is proposed.
It is   shown  in \cite{AgarwalHazn2019} that the classical Follow-the-Perturbed-Leader (FTPL) algorithm achieves ${\cO}(T^{-1/3})$ regret for general non-convex losses which are
Lipschitz continuous.  Moreover, in \cite{SN2019}, it is proved that FTPL achieves optimal regret rate ${\cO}(T^{-1/2})$ for the problem of online learning with non-convex losses.   An online
 cubic-regularized Newton method for non-convex online optimization is studied in \cite{RBGM2019}.

 In this paper, we  consider a more complicated non-convex online optimization problem, which has a complex constraint set defined  by
\begin{equation}\label{onlineP}
 \Phi=\{x \in \cC: g_i(x) \leq 0,\ i=1,\ldots, p\}.
\end{equation}
Here,  $\cC \subset \bR^n$  is a nonempty convex compact set with diameter  $D_0:=\sup_{x,x'\in \cC}\|x-x'\|$ and  $g_i: \bR^n \rightarrow \bR$, $i=1,\ldots,p$ are continuous (possibly non-convex) functions.

In order to alleviate the computational challenge of the projection $\Pi_{\Phi}(\cdot)$ with $\Phi$ defined by (\ref{onlineP}), in \cite{MJY2012} the authors considered to relax the constraints $g_i(x)\leq 0$ to be long term  constraints. That is, the decision $x^t$ is not required to satisfy $g_i(x_t)\leq 0$ at each round, but only require that $\sum_{t=1}^Tg_i(x^t)\leq 0$.
There are some recent works related to online convex optimization with long term constraints.  In \cite{MJY2012},
a gradient based algorithm is designed  to achieve ${\cO}(T^{-1/2})$ regret bound and ${\cO}(T^{-1/4})$   violation of constraints for an online optimization problem whose constraint set is defined by a set of inequalities of smooth convex functions.  In
 \cite{JHA2016,YNeely2016}  new algorithms are developed to improve the performance  in comparison with \cite{MJY2012}. However, for non-convex online optimization problems with non-convex loss functions and constraint sets of the form (\ref{onlineP}), the research has been very limited until recently.


  At round $t$, we consider the following non-convex optimization problem
\begin{equation}\label{eq:Pt}
\begin{array}{ll}
\min\limits_{x\in \cC} & f_t(x)\\[4pt]
{\rm s.t.} & g(x) \leq 0,
\end{array}
\end{equation}
where $g(x):=(g_1(x),\ldots,g_p(x))^T$. Since Problem (\ref{eq:Pt}) is non-convex, it is unrealistic to analyze the regrets in both objective reduction and constraint violation. Just like the offline non-convex optimization, it is natural to consider the Karush-Kuhn-Tucker (KKT) conditions which are given by
\begin{equation}\label{KKTpt}
\begin{array}{l}
0\in \nabla_xL^t(x,\lambda)+N_{\cC}(x),\\[10pt]
0 \geq g(x) \perp \lambda\geq 0,
\end{array}
\end{equation}
where $L^t(x,\lambda)=f_t(x)+\lambda^Tg(x)$ and $N_{\cC}(x)$ is the normal cone of $\cC$ at $x$.
Conditions (\ref{KKTpt}) are equivalent to the following equalities:
\begin{equation}\label{KKTge}
\begin{array}{l}
{\rm dist}\, \Big(0, \nabla_xL^t(x,\lambda)+N_{\cC}(x)\Big)=0,\\[10pt]
\lambda-[\lambda+ g(x)]_+=0,
\end{array}
\end{equation}
where $[\cdot]_+:=\max\{0,\cdot\}$.
Therefore, it is reasonable to consider the regret of violation for the equalities in (\ref{KKTge}).

In this paper, we extend the proximal method of multipliers, a classical algorithm proposed in \cite{Rockafellar76A} to solve convex programming, for  online non-convex optimization problem, and analyze its regret bounds  for KKT violation consisting of Lagrangian residual, constraint violation and complementarity residual. Let
 $q^t_i(x)$, $i=0,1,\ldots,p$ be the quadratic approximations of $f_t$ and $g_i,i=1,\ldots,p$ at $x^t$, respectively, defined by
\[
\begin{array}{l}
q^t_0(x):= f_t(x^t)+\langle \nabla f_t(x^t), x-x^t \rangle+ \frac{1}{2}\left \langle \Theta^t_0(x-x^t), x-x^t\right\rangle,\\[10pt]
 q^t_i(x):=g_i(x^t)+\langle  \nabla g_i(x^t), x-x^t \rangle+ \frac{1}{2} \left\langle \Theta^t_i(x-x^t), x-x^t\right\rangle,\ i=1,\ldots, p,
 \end{array}
\]
where  $\Theta^t_0 \in \mathbb S^n$ and  $\Theta^t_i \in \mathbb S^n$ are properly selected symmetric $n\times n$ matrices.
The corresponding augmented Lagrangian function  is defined by
\begin{equation}\label{augL}
\cL^t_{\sigma}(x,\lambda): =q_0^t(x) + \frac{1}{2\sigma}\left[ \sum_{i=1}^p[\lambda_i+\sigma q^t_i(x)]_+^2-\|\lambda\|^2\right]
\end{equation}
for $(x,\lambda)\in \bR^n \times \bR^p$ and $\sigma>0$. At each round $t$, we let $x^{t+1}$ be the optimal solution of the following problem
\[
\min_{x \in \cC} \, \cL^t_{\sigma }(x,\lambda^t) +\frac{\alpha}{2}\|x-x^t\|^2,
\]
and update the multipliers by $\lambda^{t+1}_i=[\lambda^t_i+\sigma q_i^t(x^{t+1})]_+,\ i=1,\ldots,p$,
where $\alpha>0$ is some parameter. Let
$q^t(x):=(q_1^t(x),\ldots,q_p^t(x))^T$,
then
$\lambda^{t+1}=[\lambda^t+\sigma q^t(x^{t+1})]_+$.
In detail, the online  proximal method of multipliers (OPMM) with quadratic approximations for the non-convex online optimization problem with constraint set  (\ref{onlineP}) can be described
in Algorithm \ref{algo:OPMM}.

\begin{algorithm2e}[!htp]\caption{
An online proximal method of multipliers (OPMM) with quadratic approximations.}\label{algo:OPMM}
  \SetKwInOut{Input}{Input}
  \Input{$\lambda^1=0$, $x^1 \in \cC$, $\sigma >0$ and $\alpha>0$, receive a cost function $f_1(\cdot)$. }
  \BlankLine
  \For{$t\leftarrow 1$ \KwTo $T$}{
  \emph{Choose $\Theta^t_0 \in \mathbb S^n$ and  $\Theta^t_i \in \mathbb S^n$, $i=1,\ldots, p$ such that $q^t_0(\cdot)$ and $q^t_i(\cdot)$  are proper quadratic approximations of $f_t(\cdot)$ and $g_i(\cdot)$ at $x^t$, respectively.}\

  \emph{Compute}\
 \begin{equation}\label{xna}
x^{t+1}= \argmin_{x \in \cC} \,\left\{ \cL^t_{\sigma }(x,\lambda^t) +\frac{\alpha}{2}\|x-x^t\|^2\right\}.
\end{equation}	

  \emph{Update}\
\[
\lambda^{t+1}_i=[\lambda^t_i+\sigma q_i^t(x^{t+1})]_+,\ i=1,\ldots,p.
\]

  \emph{Receive a cost function $f_{t+1}(\cdot)$.}\
  }
\end{algorithm2e}


The main results of this paper can be summarized as follows.
\begin{itemize}
\item When we choose $\sigma=T^{-1/4}$ and $\alpha=T^{1/4}$,  under mild assumptions, there exists $w^{t+1} \in N_{\cC}(x^{t+1})$ for any $t=1,\ldots, T$ such that the regret of Lagrangian residual is
\[
\left\|  \frac{1}{T}\sum_{t=1}^T \left[\nabla f_{t+1}(x^{t+1})+ \sum_{i=1}^p \lambda^{t+1}_i \nabla g_i(x^{t+1})+w^{t+1} \right]\right\|\\[10pt]
  \leq \cO(T^{-1/8}),
\]
the regret of constraint violation is
$$
 \frac{1}{T}\sum_{t=1}^T g_i(x^t)\leq \cO( T^{-1/8}),\quad i=1,\ldots,p,
$$
and the regret of complementarity residual is
$$
 \frac{1}{T}\sum_{t=1}^T\|\lambda^{t+1}-[\lambda^{t+1}+\sigma g(x^{t+1})]_+\| \leq \cO(T^{-1/4}).
$$
\item For the case that the objective function $f_t$ is convex quadratic, if $\sigma=T^{-1/2}$ and $\alpha=T^{1/2}$, the regret of objective reduction is
\[
 \frac{1}{T}\sum_{t=1}^Tf_t(x^t)-\inf_{z \in \Phi}  \frac{1}{T}\sum_{t=1}^Tf_t(z) \leq\cO( T^{-1/2}).
\]
\item For the case that $g_1,\ldots, g_p$ are convex functions, if  the solution of the subproblem in OPMM is obtained  by solving the dual of the subproblem,  OPMM  can be reformulated as  an implementable  projection  method.
\end{itemize}

The remaining parts of this paper are organized as follows. In Section \ref{sec:prop}, we develop properties of OPMM, which play a key role in the regret analysis of OPMM. In Section \ref{sec:regret}, we establish regret bounds of Lagrangian residual, constraint violation and complementarity residual of OPMM for Problem (\ref{eq:Pt}). In Section \ref{sec:cvx-cons}, for the convex constraint set, OPMM is explained as an  implementable  projection  method for solving the online  optimization problem with long term constraints.  We draw a conclusion in Section \ref{sec:conclusion}.

\section{Auxiliary Properties of OPMM}\label{sec:prop}
In this section, we focus on establishing a variety of auxiliary properties of OPMM under some reasonable  assumptions. We begin by introducing two classes of assumptions, in which the first class is about the structure of Problem (\ref{eq:Pt}) and the second class is to ensure that the quadratic approximations $q_i^t(x)$, $i=0,1,\ldots,p$ are well-defined.
\begin{assumptionA}\label{assu:lip}
There exist constants $\kappa_f>0$, $\kappa_g>0$ and $\nu_g>0$ such that for all $x, x' \in \cC$ and $i=1,\ldots, p$,
\[
\lvert f_t(x)-f_t(x')\rvert\leq \kappa_f\|x-x'\|,\  \lvert g_i(x)-g_i(x')\rvert\leq \kappa_g\|x-x'\|,
\]
and $\|g(x)\|\leq \nu_g$.
\end{assumptionA}

\begin{assumptionA}\label{assu:smooth}
The functions $f_t$ and $g_i$, $i=1,\ldots,p$ are continuously differentiable over  $\cC$.
There exist constants $L_f>0$ and $L_g>0$  such that for all $x, x' \in \cC$ and $i=1,\ldots, p$,
\[
\|\nabla f_t(x)-\nabla f_t(x')\|\leq L_f\|x-x'\|,\  \|\nabla g_i(x)-\nabla g_i(x')\|\leq L_g\|x-x'\|.
\]
\end{assumptionA}

\begin{assumptionA}\label{assu:slater}
The Slater condition holds, that is,
there exist a constant $\varepsilon_0>0$ and a vector $\widehat x \in \cC$ such that
\[
     g_i(\widehat x) \leq -\varepsilon_0, \, i=1,\ldots, p.
\]
\end{assumptionA}

Note that the set $\cC$ is bounded, if Assumption \ref{assu:smooth} holds true, we have  that Assumption \ref{assu:lip} is satisfied. Indeed, if Assumption \ref{assu:smooth} holds, it follows that $g(x)$, $\nabla f_t(x)$ and $\nabla g_i(x)$ are bounded over $\cC$, and hence $f_t(x)$ and $g_i(x)$ are Lipschitz continuous. From Assumption \ref{assu:slater} and $\|g(x)\|\leq\nu_g$ in Assumption \ref{assu:lip} it follows that $\nu_g\geq\|g(\widehat x)\|\geq \sqrt{p}\varepsilon_0$, which implicitly implies that $\nu_g\geq \varepsilon_0$.

\begin{assumptionB}\label{assu:psd}
The matrix $\Theta^t_0$ is positively semidefinite.
\end{assumptionB}

\begin{assumptionB}\label{assu:q}
It holds that $q^t_i(x)\leq g_i(x)$, $i=1,\ldots,p$ for all $x\in\cC$.
\end{assumptionB}

\begin{assumptionB}\label{assu:matrix}
There exists a  constant $\kappa_{q}>0$ such that  $\|\Theta^t_i\|\leq \kappa_{q}$ for $i=0,1,\ldots, p$.
\end{assumptionB}

\begin{assumptionB}\label{assu:convex}
The augmented Lagrangian function
$
\cL^t_{\sigma }(\cdot,\lambda^t)
$ is  convex.
\end{assumptionB}

 Roughly speaking, the role of Assumptions \ref{assu:psd}--\ref{assu:convex} is to let the functions $q_i^t$ be conservatively convex approximations to $f_t$ and $g_i$, $i=1,\ldots,p$, respectively, and let the subproblem (\ref{xna}) in OPMM be easily solvable.
We remark that Assumption \ref{assu:convex} is satisfied if all matrices $\Theta_i^t$, $i=0,1,\ldots,p$ are positively semidefinite.

\begin{lemma}\label{lem:2}
Let Assumptions  \ref{assu:lip}, \ref{assu:matrix} be satisfied.   Then, for  $i=1,\ldots,p$,
\[
 \sum_{t=1}^Tg_i(x^t)\leq  \frac{1}{\sigma}\lambda^{T+1}_i+\gamma\kappa_g^2T +\left[ \frac{1}{4\gamma} + \frac{\kappa_{q}}{2}\right]\sum_{t=1}^T\|x^{t+1}-x^t\|^2,
\]
where $\gamma>0$ is an arbitrary  scalar.
\end{lemma}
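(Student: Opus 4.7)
The plan is to exploit the multiplier update to replace $g_i(x^t)$ by the quadratic surrogate $q_i^t(x^{t+1})$, and then control the error of this replacement using the Lipschitz and boundedness assumptions. First, I would observe that since $[\cdot]_+$ dominates its argument, the update rule $\lambda_i^{t+1}=[\lambda_i^t+\sigma q_i^t(x^{t+1})]_+$ immediately yields
\[
\sigma q_i^t(x^{t+1}) \leq \lambda_i^{t+1}-\lambda_i^t.
\]
Summing from $t=1$ to $T$ and using $\lambda^1=0$ gives the telescoping bound $\sigma\sum_{t=1}^T q_i^t(x^{t+1}) \leq \lambda_i^{T+1}$.

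Next I would relate $g_i(x^t)$ to $q_i^t(x^{t+1})$ via the very definition of the quadratic model evaluated at $x^{t+1}$:
\[
g_i(x^t) = q_i^t(x^{t+1}) - \langle \nabla g_i(x^t), x^{t+1}-x^t\rangle - \tfrac{1}{2}\langle \Theta_i^t(x^{t+1}-x^t), x^{t+1}-x^t\rangle.
\]
Summing over $t$ and inserting the telescoping bound produces three terms: $\frac{1}{\sigma}\lambda_i^{T+1}$ plus a linear correction and a quadratic correction.

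For the linear term, the Lipschitz continuity of $g_i$ in Assumption~\ref{assu:lip} (combined with differentiability) gives $\|\nabla g_i(x^t)\|\leq \kappa_g$; then Cauchy--Schwarz followed by the Young inequality $ab \leq \gamma a^2 + \frac{1}{4\gamma}b^2$ yields
\[
-\langle \nabla g_i(x^t), x^{t+1}-x^t\rangle \leq \gamma\kappa_g^2 + \tfrac{1}{4\gamma}\|x^{t+1}-x^t\|^2.
\]
For the quadratic term, Assumption~\ref{assu:matrix} gives $\|\Theta_i^t\|\leq \kappa_q$, so
\[
-\tfrac{1}{2}\langle \Theta_i^t(x^{t+1}-x^t), x^{t+1}-x^t\rangle \leq \tfrac{\kappa_q}{2}\|x^{t+1}-x^t\|^2.
\]
Summing these bounds across $t=1,\ldots,T$ and adding them to $\frac{1}{\sigma}\lambda_i^{T+1}$ produces exactly the claimed inequality.

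I do not expect any serious obstacle: the argument is essentially a telescoping of the dual update combined with Young's inequality, and the role of the hypotheses (Lipschitz gradient bound and spectral bound on $\Theta_i^t$) is transparent. The only mild subtlety is noting that Assumption~\ref{assu:lip} together with differentiability of $g_i$ (implicit in the use of $\nabla g_i$ in the definition of $q_i^t$) forces $\|\nabla g_i\|\leq \kappa_g$ on $\cC$; this is standard and requires no further hypothesis beyond those already listed.
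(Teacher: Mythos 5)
Your proposal is correct and follows essentially the same route as the paper's proof: both use $[a]_+\geq a$ in the multiplier update to get $\sigma q_i^t(x^{t+1})\leq\lambda_i^{t+1}-\lambda_i^t$, telescope with $\lambda^1=0$, and control the gap between $g_i(x^t)$ and $q_i^t(x^{t+1})$ via $\|\nabla g_i(x^t)\|\leq\kappa_g$, $\|\Theta_i^t\|\leq\kappa_q$, and Young's inequality with the same constants $\gamma\kappa_g^2$ and $\frac{1}{4\gamma}+\frac{\kappa_q}{2}$. The only cosmetic difference is that the paper derives the per-round inequality first and then sums, while you telescope first; the computations are identical.
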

\begin{proof} From the relation $\lambda^{t+1}_i=[\lambda^t_i+\sigma q^t_i(x^{t+1})]_+$ and the fact that $[a]_+\geq a$ for any scalar $a$, we have
$$
\begin{array}{ll}
\lambda^{t+1}_i
&\geq \lambda^t_i+\sigma\left( g_i(x^t)+\langle \nabla g_i(x^t), x^{t+1}-x^t\rangle + \frac{1}{2} \left \langle\Theta^t_i (x^{t+1}-x^t),x^{t+1}-x^t \right \rangle \right)\\[10pt]
& \geq \lambda^t_i+\sigma\left(g_i(x^t)- \|\nabla g_i(x^t)\|\|x^{t+1}-x^t\|- \frac{1}{2} \|\Theta^t_i\|\| x^{t+1}-x^t\|^2\right),
\end{array}
$$
which, together with Assumptions  \ref{assu:lip}, \ref{assu:matrix}, implies for any $\gamma>0$ that
\[
\frac{1}{\sigma}(\lambda^{t+1}_i- \lambda^t_i)\geq g_i(x^t)-\gamma\kappa_g^2-\left(\frac{1}{4\gamma}+\frac{\kappa_q}{2}\right)\|x^{t+1}-x^t\|^2.
\]
Summing up the above inequality from $t=1$ to $T$, rearranging terms and noticing that $\lambda^1=0$, we derive the claim.
\end{proof}

In order to obtain a bound of $ \sum_{t=1}^T g_i(x^t)$ in Lemma \ref{lem:2}, we need to estimate an upper bound of $\sum_{t=1}^T\|x^{t+1}-x^t\|^2$, which is given in the following lemma.
\begin{lemma}\label{lem:3}
Let Assumptions  \ref{assu:lip}, \ref{assu:psd}, \ref{assu:q} be satisfied.  Then, for  any $\alpha > 0$,
\[
 \sum_{t=1}^T\|x^{t+1}-x^t\|^2 \leq  \frac{4}{\alpha}\left[  \frac{T}{\alpha}\kappa_f^2+\nu_g \sum_{t=1}^T\|\lambda^t\|+\frac{\sigma}{2}\nu_g^2T\right].
\]
\end{lemma}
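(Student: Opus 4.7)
The plan is to derive a per-iteration inequality from the optimality of $x^{t+1}$ in subproblem~(\ref{xna}) and then sum over $t$ and telescope in the dual norm.

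First, since $x^{t+1}$ minimizes $\cL^t_{\sigma}(\cdot,\lambda^t) + \frac{\alpha}{2}\|\cdot - x^t\|^2$ over $\cC$ and $x^t\in\cC$, plugging $x^t$ into the objective gives
\[
\frac{\alpha}{2}\|x^{t+1}-x^t\|^2 \;\leq\; \cL^t_\sigma(x^t,\lambda^t) - \cL^t_\sigma(x^{t+1},\lambda^t).
\]
Using the exactness $q_0^t(x^t)=f_t(x^t)$ and $q_i^t(x^t)=g_i(x^t)$, together with the update rule, which implies $\sum_{i=1}^p [\lambda_i^t+\sigma q_i^t(x^{t+1})]_+^2 = \|\lambda^{t+1}\|^2$, the right-hand side rearranges into
\[
\bigl[f_t(x^t) - q_0^t(x^{t+1})\bigr] \;+\; \frac{1}{2\sigma}\sum_{i=1}^p [\lambda_i^t + \sigma g_i(x^t)]_+^2 \;-\; \frac{1}{2\sigma}\|\lambda^{t+1}\|^2.
\]

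Next, I would bound each of the two nontrivial summands. By convexity of $q_0^t$ (Assumption \ref{assu:psd}) and $\nabla q_0^t(x^t)=\nabla f_t(x^t)$,
\[
f_t(x^t) - q_0^t(x^{t+1}) \;\leq\; -\langle\nabla f_t(x^t),\, x^{t+1}-x^t\rangle \;\leq\; \kappa_f\,\|x^{t+1}-x^t\|,
\]
where $\|\nabla f_t(x^t)\|\leq\kappa_f$ follows from the Lipschitz bound in Assumption \ref{assu:lip}. A Young's inequality of the form $\kappa_f \|x^{t+1}-x^t\| \leq \kappa_f^2/\alpha + (\alpha/4)\|x^{t+1}-x^t\|^2$ then absorbs the quadratic term back into the left-hand side. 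For the squared-plus block, the pointwise bound $[a]_+^2\leq a^2$ combined with Cauchy--Schwarz and $\|g(x^t)\|\leq \nu_g$ yields
\[
\frac{1}{2\sigma}\sum_{i=1}^p [\lambda_i^t+\sigma g_i(x^t)]_+^2 \;\leq\; \frac{1}{2\sigma}\|\lambda^t\|^2 + \nu_g\|\lambda^t\| + \frac{\sigma}{2}\nu_g^2.
\]

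Assembling these pieces gives the per-iteration estimate
\[
\frac{\alpha}{4}\|x^{t+1}-x^t\|^2 \;\leq\; \frac{\kappa_f^2}{\alpha} + \nu_g\|\lambda^t\| + \frac{\sigma}{2}\nu_g^2 + \frac{1}{2\sigma}\bigl(\|\lambda^t\|^2 - \|\lambda^{t+1}\|^2\bigr),
\]
and summing over $t=1,\ldots,T$ collapses the last parentheses telescopically. Since $\lambda^1 = 0$ and $-\|\lambda^{T+1}\|^2/(2\sigma)\leq 0$ can be dropped, multiplying through by $4/\alpha$ produces the claimed inequality. The main obstacle is the bookkeeping at the very first step: recognizing that, via the dual update, the Lagrangian difference $\cL^t_\sigma(x^t,\lambda^t)-\cL^t_\sigma(x^{t+1},\lambda^t)$ reshapes into a telescoping piece $(\|\lambda^t\|^2-\|\lambda^{t+1}\|^2)/(2\sigma)$ plus controlled slack terms. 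Once this telescoping structure is spotted, the linearization of $q_0^t$ via Assumption \ref{assu:psd} and the elementary inequality $[a]_+^2\leq a^2$ make the rest of the estimate essentially routine.
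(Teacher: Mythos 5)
Your proof is correct and follows essentially the same route as the paper's: compare the subproblem objective at $x^{t+1}$ and $x^t$, use the exactness of the quadratic models at $x^t$ together with $[a]_+^2\leq a^2$ to expand the plus-squared block, absorb $-\langle\nabla f_t(x^t),x^{t+1}-x^t\rangle$ via Young's inequality, drop the positive-semidefinite quadratic term, and telescope $\|\lambda^t\|^2-\|\lambda^{t+1}\|^2$ using $\lambda^1=0$. The only cosmetic difference is that you invoke $q_i^t(x^t)=g_i(x^t)$ directly where the paper routes the same step through Assumption \ref{assu:q}; both are valid.
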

\begin{proof} In view of  (\ref{xna}), it follows from Assumption \ref{assu:q} that
$$
\begin{array}{l}
\langle \nabla f_t(x^t), x^{t+1}-x^t\rangle + \frac{1}{2} \left \langle \Theta^t_0(x^{t+1}-x^t),x^{t+1}-x^t\right \rangle+ \frac{1}{2\sigma}\|\lambda^{t+1}\|^2 + \frac{\alpha}{2}\|x^{t+1}-x^t\|^2\\[10pt]
 \leq  \frac{1}{2\sigma}\sum_{i=1}^p[\lambda_i^t+\sigma q^t_i(x^t)]_+^2
 \leq  \frac{1}{2\sigma}\sum_{i=1}^p[\lambda_i^t+\sigma g_i(x^t)]_+^2
\leq  \frac{1}{2\sigma}\|\lambda^{t} +\sigma g(x^t)\|^2,
\end{array}
$$
which, together with Assumption  \ref{assu:lip} and \ref{assu:psd}, implies  that
$$
\begin{array}{ll}
 \frac{\alpha}{4}\|x^{t+1}-x^t\|^2\\[4pt]
  \leq  \langle \nabla f_t(x^t),x^t-x^{t+1}\rangle-  \frac{\alpha}{4}\|x^{t+1}-x^t\|^2- \frac{1}{2} \left \langle \Theta^t_0(x^{t+1}-x^t),x^{t+1}-x^t\right \rangle\\[9pt]
 \quad\quad + \frac{1}{2\sigma}[\|\lambda^t\|^2-\|\lambda^{t+1}\|^2]+\langle \lambda^t,g(x^t) \rangle+ \frac{\sigma}{2}\|g(x^t)\|^2\\[10pt]
 \leq \left ( \langle \nabla f_t(x^t),x^t-x^{t+1}\rangle-  \frac{\alpha}{4}\|x^{t+1}-x^t\|^2\right)+ \frac{1}{2\sigma}[\|\lambda^t\|^2-\|\lambda^{t+1}\|^2]+\nu_g \|\lambda^t\|+ \frac{\sigma}{2}\nu_g^2\\[10pt]
\leq  \frac{1}{\alpha}\kappa_f^2+ \frac{1}{2\sigma}[\|\lambda^t\|^2-\|\lambda^{t+1}\|^2]+\nu_g \|\lambda^t\|+ \frac{\sigma}{2}\nu_g^2.
\end{array}
$$
The claim is obtained by summing up the above inequality from $t=1$ to $T$, rearranging terms and noticing that $\lambda^1=0$.
\end{proof}

Combining Lemma \ref{lem:2} and Lemma \ref{lem:3}, we obtain the following result which plays an important role in estimating the constraint violation regret.
\begin{proposition}\label{prop:cregret}
Let Assumptions  \ref{assu:lip}, \ref{assu:psd}, \ref{assu:q}, \ref{assu:matrix} be satisfied.  Then,  for any scalar $\gamma>0$, the following results hold:
\[
 \sum_{t=1}^T g_i(x^t)\leq   \frac{1}{\sigma}\lambda^{T+1}_i+ \gamma\kappa_g^2T+\frac{1}{\alpha}\left[  \frac{1}{\gamma}+2\kappa_{q}\right] \left[  \frac{\kappa_f^2}{\alpha}T+\nu_g \sum_{t=1}^T\|\lambda^t\|+\frac{\sigma}{2}\nu_g^2T\right].
\]
\end{proposition}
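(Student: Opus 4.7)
The plan is straightforward: the proposition is a direct combination of Lemma \ref{lem:2} and Lemma \ref{lem:3}, obtained by substituting the latter into the former. First I would invoke Lemma \ref{lem:2}, which (under Assumptions \ref{assu:lip} and \ref{assu:matrix}) gives, for each $i=1,\ldots,p$ and any $\gamma>0$, the bound
\[
\sum_{t=1}^T g_i(x^t)\leq \frac{1}{\sigma}\lambda^{T+1}_i+\gamma\kappa_g^2T+\left[\frac{1}{4\gamma}+\frac{\kappa_q}{2}\right]\sum_{t=1}^T\|x^{t+1}-x^t\|^2.
\]
The coefficient $\frac{1}{4\gamma}+\frac{\kappa_q}{2}$ is nonnegative, so the inequality is preserved when we replace $\sum_{t=1}^T\|x^{t+1}-x^t\|^2$ by any upper bound.

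Next I would apply Lemma \ref{lem:3}, which (under Assumptions \ref{assu:lip}, \ref{assu:psd} and \ref{assu:q}) yields
\[
\sum_{t=1}^T\|x^{t+1}-x^t\|^2\leq \frac{4}{\alpha}\left[\frac{\kappa_f^2}{\alpha}T+\nu_g\sum_{t=1}^T\|\lambda^t\|+\frac{\sigma}{2}\nu_g^2T\right].
\]
Substituting this into the previous display and simplifying the prefactor using the identity $\bigl[\tfrac{1}{4\gamma}+\tfrac{\kappa_q}{2}\bigr]\cdot\tfrac{4}{\alpha}=\tfrac{1}{\alpha}\bigl[\tfrac{1}{\gamma}+2\kappa_q\bigr]$ delivers exactly the claimed inequality. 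The assumptions of the proposition (namely \ref{assu:lip}, \ref{assu:psd}, \ref{assu:q}, \ref{assu:matrix}) are precisely the union of the hypotheses required by the two lemmas, so no additional verification is needed.

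Because the argument is a pure algebraic combination of two already-established estimates, there is essentially no analytical obstacle; the only thing to be careful about is the arithmetic of the constants when folding the prefactor $\tfrac{1}{4\gamma}+\tfrac{\kappa_q}{2}$ through the factor $\tfrac{4}{\alpha}$, and the observation that the estimate in Lemma \ref{lem:2} is monotone in $\sum_{t=1}^T\|x^{t+1}-x^t\|^2$ so that upper-bounding this quantity via Lemma \ref{lem:3} is legitimate.
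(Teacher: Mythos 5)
Your proposal is correct and matches the paper's own argument exactly: the paper proves Proposition \ref{prop:cregret} simply by combining Lemma \ref{lem:2} and Lemma \ref{lem:3}, and your substitution together with the arithmetic check $\bigl[\tfrac{1}{4\gamma}+\tfrac{\kappa_q}{2}\bigr]\cdot\tfrac{4}{\alpha}=\tfrac{1}{\alpha}\bigl[\tfrac{1}{\gamma}+2\kappa_q\bigr]$ and the observation that the union of the two lemmas' hypotheses is precisely the stated assumption set is all that is needed.
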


We next focus our attention on examining the bound of Lagrangian multiplier $\lambda^t$.
\begin{lemma}\label{lem:1}
Let    Assumptions  \ref{assu:lip}, \ref{assu:matrix} be satisfied. Then
\[
\|\lambda^t\|- \sigma\beta_0 \leq \|\lambda^{t+1}\| \leq \|\lambda^t\|+ \sigma\beta_0,
\]
where
\begin{equation}\label{eq:beta0}
\beta_0:=\left[\nu_g+\sqrt{p} \left(\kappa_gD_0+ \frac{1}{2}\kappa_{q}D_0^2\right)\right].
\end{equation}
\end{lemma}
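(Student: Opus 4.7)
\medskip

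The plan is to obtain the lemma directly from the reverse triangle inequality applied to a one-step bound on $\lambda^{t+1}-\lambda^t$, which in turn follows from non-expansiveness of $[\,\cdot\,]_+$ together with a uniform bound on $\|q^t(x^{t+1})\|$.

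First, I would note that $\lambda^t\geq 0$ component-wise for every $t$; this is a trivial induction from $\lambda^1=0$ and the fact that the update is $\lambda^{t+1}_i=[\lambda^t_i+\sigma q^t_i(x^{t+1})]_+\geq 0$. Because $\lambda^t\geq 0$, we have $[\lambda^t]_+=\lambda^t$, so that
\[
\lambda^{t+1}-\lambda^t \;=\; \bigl[\lambda^t+\sigma q^t(x^{t+1})\bigr]_+ \;-\; [\lambda^t]_+.
\]
Since $[\,\cdot\,]_+$ is $1$-Lipschitz coordinate-wise (it is the projection onto the non-negative orthant), applying this and pulling out $\sigma$ gives $\|\lambda^{t+1}-\lambda^t\|\leq \sigma\|q^t(x^{t+1})\|$.

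Next, I would bound $\|q^t(x^{t+1})\|$ by $\beta_0$. For each $i$, by the definition of $q_i^t$,
\[
|q_i^t(x^{t+1})| \leq |g_i(x^t)| + \|\nabla g_i(x^t)\|\,\|x^{t+1}-x^t\| + \tfrac{1}{2}\|\Theta_i^t\|\,\|x^{t+1}-x^t\|^2.
\]
Assumption \ref{assu:lip} gives $\|\nabla g_i(x^t)\|\leq \kappa_g$ (the Lipschitz constant of $g_i$ is an upper bound on its gradient norm wherever it is differentiable, which holds on $\cC$ in this framework), and Assumption \ref{assu:matrix} gives $\|\Theta_i^t\|\leq \kappa_q$. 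Since $x^t,x^{t+1}\in \cC$, diameter considerations give $\|x^{t+1}-x^t\|\leq D_0$. Combining these and taking Euclidean norms over the $p$ coordinates yields
\[
\|q^t(x^{t+1})\| \;\leq\; \|g(x^t)\| + \sqrt{p}\Bigl(\kappa_g D_0 + \tfrac{1}{2}\kappa_q D_0^2\Bigr) \;\leq\; \nu_g + \sqrt{p}\Bigl(\kappa_g D_0 + \tfrac{1}{2}\kappa_q D_0^2\Bigr) \;=\; \beta_0,
\]
where the bound on $\|g(x^t)\|$ comes again from Assumption \ref{assu:lip}.

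Finally, the reverse triangle inequality gives $\bigl|\|\lambda^{t+1}\|-\|\lambda^t\|\bigr|\leq \|\lambda^{t+1}-\lambda^t\|\leq \sigma\beta_0$, which is exactly the two-sided bound stated in the lemma. The argument is essentially routine; the only mild subtlety is recognizing that the coordinate-wise non-expansiveness of $[\,\cdot\,]_+$ lifts to a non-expansive bound in the Euclidean norm, which is what enables a clean one-step perturbation estimate without any structural assumption beyond Assumptions \ref{assu:lip} and \ref{assu:matrix}.
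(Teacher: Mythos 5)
Your proposal is correct and follows essentially the same route as the paper's proof: non-expansiveness of $[\cdot]_+$ (using $\lambda^t=[\lambda^t]_+$) to reduce to bounding $\sigma\|q^t(x^{t+1})\|$, then the decomposition into $\|g(x^t)\|\leq\nu_g$ plus the linear and quadratic correction terms bounded via $\kappa_g$, $\kappa_q$ and the diameter $D_0$, and finally the reverse triangle inequality. The only cosmetic difference is that you make explicit the induction showing $\lambda^t\geq 0$, which the paper leaves implicit.
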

\begin{proof}
It follows from the nonexpansion property of  $[\cdot]_+$,   Assumption  \ref{assu:lip} and Assumption  \ref{assu:matrix} that
$$
\begin{array}{ll}
\|\lambda^{t+1}-\lambda^t\| & =\|[\lambda^t+\sigma q^t(x^{t+1})]_+-[\lambda^t]_+\|\leq\sigma\|q^t(x^{t+1})\|\\[8pt]
& \leq
\sigma \|g(x^{t})\| + \sigma \|q^t(x^{t+1})-g(x^t)\|\\[8pt]
& \leq \sigma\nu_g+\sigma \left ( \sum_{i=1}^p \left(\|\nabla g_i(x^t)\|\|x^{t+1}-x^t\|+ \frac{1}{2}\|\Theta^t_i\|\|x^{t+1}-x^t\|^2\right)^2\right)^{1/2}\\[8pt]
& \leq \sigma\nu_g+\sigma \left ( \sum_{i=1}^p \left(\kappa_gD_0+ \frac{1}{2}\kappa_{q}D_0^2\right)^2\right)^{1/2}\\[8pt]
& \leq \sigma\left[\nu_g+\sqrt{p} \left(\kappa_gD_0+ \frac{1}{2}\kappa_{q}D_0^2\right)\right],
\end{array}
$$
which completes the proof.
\end{proof}
\begin{lemma}\label{lem:l5}
Let  Assumptions \ref{assu:lip}, \ref{assu:slater}, \ref{assu:psd}, \ref{assu:q}, \ref{assu:matrix}, \ref{assu:convex} be satisfied.
 Let $s > 0$ be an arbitrary integer and
 \begin{equation}\label{eq:theta9}
 \vartheta (\sigma,\alpha,s):= \frac{\varepsilon_0\sigma s}{2}+\beta_0\sigma(s-1)+ \frac{\alpha D_0^2}{\varepsilon_0s}+ \frac{\left(2\kappa_fD_0+\kappa_{q}D_0^2\right)}{\varepsilon_0}+
  \frac{\sigma \nu_g^2}{\varepsilon_0}.
 \end{equation}
Then, it follows that
\begin{equation}\label{eq:6}
\lvert\|\lambda^{t+1}\|-\|\lambda^t\|\rvert\leq \sigma \beta_0,
\end{equation}
where $\beta_0$ is defined by (\ref{eq:beta0}).
Moreover, if $\|\lambda^t\| \geq  \vartheta (\sigma,\alpha,s)$, it holds that
\begin{equation}\label{eq:7}
 \|\lambda^{t+s}\|-\|\lambda^t\|
\leq 
-s  \frac{\sigma \varepsilon_0}{2}.
\end{equation}
\end{lemma}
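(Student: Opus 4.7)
The first bound~(\ref{eq:6}) is immediate from Lemma~\ref{lem:1}: the reverse triangle inequality gives $\bigl|\|\lambda^{t+1}\|-\|\lambda^t\|\bigr|\le\|\lambda^{t+1}-\lambda^t\|\le\sigma\beta_0$. The bulk of the work is therefore devoted to~(\ref{eq:7}), and my strategy is to combine the Slater vector $\widehat{x}$ from Assumption~\ref{assu:slater} with the $\alpha$-strong convexity of the subproblem~(\ref{xna}) guaranteed by Assumption~\ref{assu:convex}.

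For each $k\in\{t,\dots,t+s-1\}$, since $x^{k+1}$ is the minimiser of the $\alpha$-strongly convex function $\cL^k_\sigma(\cdot,\lambda^k)+\tfrac{\alpha}{2}\|\cdot-x^k\|^2$ over $\cC\ni\widehat{x}$, the strong-convexity optimality inequality gives
\[
\cL^k_\sigma(\widehat{x},\lambda^k)+\tfrac{\alpha}{2}\|\widehat{x}-x^k\|^2\;\ge\;\cL^k_\sigma(x^{k+1},\lambda^k)+\tfrac{\alpha}{2}\|x^{k+1}-x^k\|^2+\tfrac{\alpha}{2}\|\widehat{x}-x^{k+1}\|^2.
\]
I plan to evaluate both sides explicitly. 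On the left I use Assumption~\ref{assu:q} ($q^k_i\le g_i$), Assumption~\ref{assu:slater} ($g_i(\widehat{x})\le-\varepsilon_0$), $\lambda^k_i\ge 0$, and $\|g(\widehat{x})\|\le\nu_g$ to estimate
\[
\sum_{i=1}^p\bigl[\lambda^k_i+\sigma q^k_i(\widehat{x})\bigr]_+^2\;\le\;\|\lambda^k+\sigma g(\widehat{x})\|^2\;\le\;\|\lambda^k\|^2-2\sigma\varepsilon_0\|\lambda^k\|+\sigma^2\nu_g^2,
\]
which reduces $\cL^k_\sigma(\widehat{x},\lambda^k)$ to $q^k_0(\widehat{x})-\varepsilon_0\|\lambda^k\|+\sigma\nu_g^2/2$. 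On the right, the multiplier update collapses $\cL^k_\sigma(x^{k+1},\lambda^k)$ to $q^k_0(x^{k+1})+\frac{1}{2\sigma}(\|\lambda^{k+1}\|^2-\|\lambda^k\|^2)$. Summing the resulting per-step inequality from $k=t$ to $k=t+s-1$ so that the $\|\widehat{x}-x^k\|^2$ differences telescope and contribute at most $D_0^2$, bounding each gap $q^k_0(\widehat{x})-q^k_0(x^{k+1})$ by $\kappa_fD_0+\tfrac{1}{2}\kappa_qD_0^2$ (here Assumption~\ref{assu:psd} is used to discard the non-positive term $-\tfrac12\langle\Theta_0^k(x^{k+1}-x^k),x^{k+1}-x^k\rangle$), and applying~(\ref{eq:6}) to replace $\sum_{k=t}^{t+s-1}\|\lambda^k\|$ by $s\|\lambda^t\|-\sigma\beta_0 s(s-1)/2$, I would obtain a master inequality of the form
\[
\|\lambda^{t+s}\|^2-\|\lambda^t\|^2\;\le\;-2s\sigma\varepsilon_0\|\lambda^t\|+\mathcal{R}(\sigma,\alpha,s),
\]
where $\mathcal{R}(\sigma,\alpha,s):=s\sigma(2\kappa_fD_0+\kappa_qD_0^2)+\sigma^2\varepsilon_0\beta_0 s(s-1)+s\sigma^2\nu_g^2+\sigma\alpha D_0^2$.

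The main obstacle is the last step: converting this squared-norm inequality into the norm inequality~(\ref{eq:7}). A direct calculation shows that each summand in $\vartheta(\sigma,\alpha,s)$ from~(\ref{eq:theta9}) matches, after division by $s\sigma\varepsilon_0$, a corresponding term in $\mathcal{R}(\sigma,\alpha,s)$, so the hypothesis $\|\lambda^t\|\ge\vartheta(\sigma,\alpha,s)$ is exactly what forces $\mathcal{R}(\sigma,\alpha,s)\le s\sigma\varepsilon_0\|\lambda^t\|$ and hence $-2s\sigma\varepsilon_0\|\lambda^t\|+\mathcal{R}(\sigma,\alpha,s)\le-s\sigma\varepsilon_0\|\lambda^t\|<0$. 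I then plan to factor $\|\lambda^{t+s}\|^2-\|\lambda^t\|^2=(\|\lambda^{t+s}\|-\|\lambda^t\|)(\|\lambda^{t+s}\|+\|\lambda^t\|)$; the strict negativity of the right-hand side compels $\|\lambda^{t+s}\|\le\|\lambda^t\|$, whence $\|\lambda^{t+s}\|+\|\lambda^t\|\le 2\|\lambda^t\|$, and dividing yields $\|\lambda^{t+s}\|-\|\lambda^t\|\le -s\sigma\varepsilon_0/2$, as required. The delicate point is precisely this square-to-norm passage: the threshold $\vartheta$ must be generous enough that, after division by $s\sigma\varepsilon_0$, every term of $\mathcal{R}$ is absorbed by half of the useful drift $-2s\sigma\varepsilon_0\|\lambda^t\|$, which explains the precise form of each summand in~(\ref{eq:theta9}).
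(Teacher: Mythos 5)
Your proposal is correct and follows essentially the same route as the paper: the three-point optimality inequality of the strongly convex subproblem evaluated at the Slater point $\widehat{x}$, the bound $\langle\lambda^l,g(\widehat{x})\rangle\le-\varepsilon_0\|\lambda^l\|$, telescoping of the $\alpha$-terms, and the drift bound from (\ref{eq:6}) to control $\sum_l\|\lambda^l\|$, yielding the same squared-norm master inequality. The only (harmless) difference is the final passage from squares to norms, where you factor $\|\lambda^{t+s}\|^2-\|\lambda^t\|^2$ and divide by $\|\lambda^{t+s}\|+\|\lambda^t\|\le 2\|\lambda^t\|$, whereas the paper completes the square to get $\|\lambda^{t+s}\|^2\le(\|\lambda^t\|-\tfrac{\varepsilon_0\sigma s}{2})^2$; both are valid.
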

\begin{proof} Inequality (\ref{eq:6}) follows directly from Lemma \ref{lem:1}.
It remains to prove
$$  \|\lambda^{t+s}\|-\|\lambda^t\|
\leq -s  \frac{\sigma \varepsilon_0}{2}
$$
under the condition that $\|\lambda^t\| \geq  \vartheta (\sigma,\alpha,s)$.

In the sequel, for given positive integer $s$, we suppose that $\|\lambda^t\| \geq  \vartheta (\sigma,\alpha,s)$. For any $l \in \{t,t+1,\ldots, t+s-1\}$, under Assumption \ref{assu:convex}, it follows from (\ref{xna}), i.e.,
\[x^{l+1}= \argmin_{x \in \cC} \,\left\{ \cL^l_{\sigma }(x,\lambda^l) +\frac{\alpha}{2}\|x-x^l\|^2\right\},\] and its optimality conditions that $x^{l+1}$ is also a minimizer of $\cL^l_{\sigma }(x,\lambda^l) +\frac{\alpha}{2}[\|x-x^l\|^2-\|x-x^{l+1}\|^2]$ over $\cC$. Therefore,
$$
\begin{array}{l}
\langle \nabla f_l (x^l),x^{l+1}-x^l \rangle+ \frac{1}{2}\left \langle \Theta^l_0(x^{l+1}-x^l), x^{l+1}-x^l \right \rangle+ \frac{1}{2\sigma}\|\lambda^{l+1}\|^2+ \frac{\alpha}{2}\|x^{l+1}-x^l\|^2\\[10pt]
\leq \langle \nabla f_l (x^l),\widehat x-x^l \rangle+ \frac{1}{2}\left \langle \Theta^l_0(\widehat x-x^l), \widehat x-x^l \right \rangle+ \frac{1}{2\sigma}\|[\lambda^{l}+\sigma q^l(\widehat x)]_{+}\|^2\\[10pt]
\quad \quad+ \frac{\alpha}{2}\left[\|\widehat x-x^l\|^2-\|\widehat x-x^{l+1}\|^2\right]\\[10pt]
\leq \langle \nabla f_l (x^l),\widehat x-x^l \rangle+ \frac{1}{2}\left \langle \Theta^l_0(\widehat x-x^l), \widehat x-x^l \right \rangle+ \frac{1}{2\sigma}\|[\lambda^{l}+\sigma g(\widehat x)]_{+}\|^2\\[10pt]
\quad \quad + \frac{\alpha}{2}\left[\|\widehat x-x^l\|^2-\|\widehat x-x^{l+1}\|^2\right]\\[10pt]
\leq \langle \nabla f_l (x^l),\widehat x-x^l \rangle+ \frac{1}{2}\left \langle \Theta^l_0(\widehat x-x^l), \widehat x-x^l \right \rangle+ \frac{1}{2\sigma}\|\lambda^l\|^2+ \langle \lambda^l,g(\widehat x)\rangle
+\frac{\sigma}{2}\|g(\widehat x)\|^2\\[10pt]
\quad \quad + \frac{\alpha}{2}\left[\|\widehat x-x^l\|^2-\|\widehat x-x^{l+1}\|^2\right],
\end{array}
$$
in which  $\widehat{x}$ is given in Assumption \ref{assu:slater} and the second inequality above is obtained  from Assumption \ref{assu:q}.
Reorganizing terms and using   Assumptions \ref{assu:lip}, \ref{assu:psd}, \ref{assu:matrix},
we obtain
\begin{equation}\label{eq:8}
\begin{array}{ll}
 \frac{1}{2\sigma} \left[\|\lambda^{l+1}\|^2-\|\lambda^l\|^2\right]\\[10pt]
\leq \langle \nabla f_l (x^l),\widehat x-x^{l+1} \rangle + \frac{1}{2}\left \langle \Theta^l_0(\widehat x-x^l), \widehat x-x^l \right \rangle+ \langle \lambda^l,g(\widehat x)\rangle
+ \frac{\sigma}{2}\|g(\widehat x)\|^2\\[10pt]
 \quad \quad + \frac{\alpha}{2}\left[\|\widehat x-x^l\|^2-\|\widehat x-x^{l+1}\|^2\right]\\[10pt]
\leq \kappa_fD_0+ \frac{1}{2}\kappa_{q}D_0^2+ \langle \lambda^l,g(\widehat x)\rangle
+ \frac{\sigma}{2}\nu_g^2+ \frac{\alpha}{2}\left[\|\widehat x-x^l\|^2-\|\widehat x-x^{l+1}\|^2\right].
\end{array}
\end{equation}
Noting that for $l \in \{t,t+1,\ldots, t+s-1\}$, one has from Assumption A3 that
\begin{equation}\label{eq:linq}
\langle \lambda^l, g(\widehat x)\rangle= \sum_{j=1}^p\lambda^l_jg_j(\widehat x)
\leq -\varepsilon_0  \sum_{j=1}^p\lambda^l_j
\leq -\varepsilon_0\|\lambda^l\|.
\end{equation}
Thus, making  a summation of (\ref{eq:8}) over $\{t,t+1,\ldots, t+s-1\}$,  noticing
(\ref{eq:linq}) and the fact that $\|\lambda^{t+l}\|\geq \|\lambda^t\|-\sigma \beta_0l$, we obtain
\[
\begin{array}{l}
 \frac{1}{2\sigma} \left[\|\lambda^{t+s}\|^2-\|\lambda^t\|^2\right]\\[10pt]
\leq \left(\kappa_fD_0+ \frac{1}{2}\kappa_{q}D_0^2\right) s +  \frac{\sigma}{2}\nu_g^2s + \sum_{l=t}^{t+s-1}\langle \lambda^l,g(\widehat x)\rangle
 + \frac{\alpha}{2}\left[\left(\|\widehat x-x^t\|^2-\|\widehat x-x^{t+s}\|^2\right) \right]\\[10pt]
\leq \left(\kappa_fD_0+ \frac{1}{2}\kappa_{q}D_0^2\right) s +  \frac{\sigma}{2}\nu_g^2s -\varepsilon_0 \sum_{l=0}^{s-1} \|\lambda^{t+l}\|
 + \frac{\alpha}{2}D_0^2\\[10pt]
\leq \left(\kappa_fD_0+ \frac{1}{2}\kappa_{q}D_0^2\right) s +  \frac{\sigma}{2}\nu_g^2s + \frac{\alpha}{2}D_0^2 -\varepsilon_0 \sum_{l=0}^{s-1} \left[\|\lambda^{t}\|-\sigma\beta_0l \right]
\\[10pt]
\leq \left(\kappa_fD_0+ \frac{1}{2}\kappa_{q}D_0^2\right) s +  \frac{\sigma}{2}\nu_g^2s + \frac{\alpha}{2}D_0^2+ \varepsilon_0\sigma \beta_0  \frac{s(s-1)}{2}
-\varepsilon_0 s\|\lambda^{t}\|,
\end{array}
\]
which, together with $\|\lambda^t\| \geq  \vartheta (\sigma,\alpha,s)$, further implies that
\[
\begin{array}{l}
\|\lambda^{t+s}\|^2\\[10pt]\leq
\|\lambda^t\|^2+2 \sigma \left(\kappa_fD_0+ \frac{1}{2}\kappa_{q}D_0^2\right) s+\sigma^2\nu_g^2s+\alpha \sigma D_0^2
+\varepsilon_0\sigma^2\beta_0s(s-1)-2\varepsilon_0\sigma s \|\lambda^{t}\| \\[10pt]
=\left(\|\lambda^t\|- \frac{\varepsilon_0\sigma}{2}s\right )^2
- \frac{3\varepsilon_0^2\sigma^2}{4}s^2+\varepsilon_0\sigma s \vartheta (\sigma,\alpha,s)-\varepsilon_0\sigma s \|\lambda^{t}\| \\[10pt]
\leq \left(\|\lambda^t\|- \frac{\varepsilon_0\sigma}{2}s\right )^2.
\end{array}
\]
Noticing that $\|\lambda^t\| \geq  \vartheta (\sigma,\alpha,s)\geq\frac{\varepsilon_0\sigma}{2}s$, we have
$\|\lambda^{t+s}\|\leq
\|\lambda^t\|- \frac{\varepsilon_0\sigma}{2}s.
$
The proof is completed.
\end{proof}

The following lemma is a simple variation of  \cite[Lemma 5]{YNeely2017}, which shall be used to deal with KKT violation regret of OPMM. The proof is provided in Appendix \ref{app:lem}.
\begin{lemma}\label{lem:l7}
Let $\{Z_t\}$ be a sequence  with $Z_0 = 0$. Suppose there exist an integer $t_0 >0$, real constants $\theta>0$, $\delta_{\max}>0$ and $ 0 <\zeta \leq \delta_{\max}$ such that
$\lvert Z_{t+1}-Z_t\rvert  \leq \delta_{\max}$
and
\begin{equation}\label{eq:Y9}
Z_{t+t_0}-Z_t  \leq 
-t_0\zeta, \quad \mbox{if } Z_t \geq \theta
\end{equation}
hold for all $t \in \{1,2,\ldots\}.$ Then,
\begin{equation}\label{eq:re-ineq}
Z_t \leq \theta +t_0 \delta_{\max}+t_0  \frac{4 \delta_{\max}^2}{\zeta}\log \left[  \frac{8 \delta_{\max}^2}{\zeta^2} \right], \forall t \in \{1,2,\ldots\}.
\end{equation}
\end{lemma}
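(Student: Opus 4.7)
The plan is to follow Neely's exponential Lyapunov drift technique in this deterministic setting. The key insight is to reduce the multi-step drift hypothesis to a one-step condition on a subsampled sequence, then exploit a Lyapunov function whose exponent is tuned to balance a geometric contraction rate against a bounded reset term.

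First I would pass to the block sequence $W_k := Z_{k t_0}$ for $k = 0, 1, 2, \ldots$. Telescoping the single-step bound yields $|W_{k+1} - W_k| \leq t_0 \delta_{\max}$, and applying the second hypothesis at $t = k t_0$ gives the one-step drift $W_{k+1} \leq W_k - t_0 \zeta$ whenever $W_k \geq \theta$. Writing $D := t_0 \delta_{\max}$ and $\eta := t_0 \zeta$, I would then introduce the exponential Lyapunov function $V_k := \exp(r(W_k - \theta))$ with a parameter $r > 0$ to be chosen later. A two-case analysis---the drift case giving $V_{k+1} \leq V_k e^{-r\eta}$, and the bounded case (in which $V_k \leq 1$) giving $V_{k+1} \leq e^{rD}$---produces the clean linear recursion $V_{k+1} \leq V_k e^{-r\eta} + e^{rD}$, starting from $V_0 = e^{-r\theta} \leq 1$.

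An inductive argument using the trivial base estimate $V_0 \leq 1 \leq e^{rD}/(1 - e^{-r\eta})$ then yields the uniform bound $V_k \leq e^{rD}/(1 - e^{-r\eta})$ for every $k$, and the elementary inequality $1 - e^{-x} \geq x/2$ for $x \in (0,1]$ refines this to $V_k \leq 2 e^{rD}/(r\eta)$. Taking logarithms gives the block estimate $W_k \leq \theta + D + (1/r)\log(2/(r\eta))$. The decisive choice is $r = \zeta/(4 t_0 \delta_{\max}^2)$; the hypothesis $\zeta \leq \delta_{\max}$ then guarantees $rD \leq 1/4$ and $r\eta \leq 1/4$, validating the approximations made above, and substituting this value recovers the specific logarithmic factor $(4 t_0 \delta_{\max}^2/\zeta)\log(8 \delta_{\max}^2/\zeta^2)$ of the conclusion. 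Finally, writing $k = \lfloor t/t_0 \rfloor$ and applying the telescoped single-step bound $|Z_t - W_k| \leq t_0 \delta_{\max}$ converts the block estimate on $W_k$ into the claimed inequality (\ref{eq:re-ineq}).

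The main obstacle is the identification of the optimal exponent $r$: there is a genuine tension between the contraction factor $e^{-r\eta}$ (favoring large $r$) and the reset term $e^{rD}$ (favoring small $r$), and resolving this trade-off is precisely what produces the specific logarithmic constants stated. The remaining steps---the two-case analysis, the geometric summation, and the conversion back to $Z_t$---are routine bookkeeping with elementary inequalities.
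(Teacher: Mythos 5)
Your core argument is the same exponential Lyapunov drift technique the paper uses (it is a variant of Neely's lemma), with the identical exponent $r=\zeta/(4t_0\delta_{\max}^2)$ and the identical logarithmic constant: your $\tfrac{2}{r\eta}=\tfrac{8\delta_{\max}^2}{\zeta^2}$ is exactly the paper's $\tfrac{1}{1-\rho}$. Your handling of the contraction is in fact slightly cleaner — you keep the exact factor $e^{-r\eta}$ in the drift case and only approximate at the end via $1-e^{-x}\ge x/2$ on $(0,1/4]$, whereas the paper linearizes up front with $e^{\tau}\le 1+\tau+2\tau^2$; both routes land on the same geometric recursion and the same fixed point $e^{rD}/(1-e^{-r\eta})$.

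There is, however, one concrete step that does not deliver the stated inequality: the subsample-then-interpolate finish. Your block estimate already reads $W_k\le \theta+t_0\delta_{\max}+\tfrac{1}{r}\log\tfrac{2}{r\eta}$ (the $+t_0\delta_{\max}$ comes from the reset term $e^{rD}$ in the bounded case), and adding the interpolation error $|Z_t-W_{\lfloor t/t_0\rfloor}|\le (t_0-1)\delta_{\max}$ on top gives $Z_t\le\theta+(2t_0-1)\delta_{\max}+\tfrac{4t_0\delta_{\max}^2}{\zeta}\log\tfrac{8\delta_{\max}^2}{\zeta^2}$, which is strictly weaker than (\ref{eq:re-ineq}) whenever $t_0\ge 2$. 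The repair is to not subsample at all: the $t_0$-step drift hypothesis (\ref{eq:Y9}) is assumed at \emph{every} $t$, so your recursion $V_{t+t_0}\le V_t e^{-r\eta}+e^{rD}$ (with $V_t:=e^{r(Z_t-\theta)}$) holds for every $t$, and you can induct over all $t$ simultaneously, handling the base cases $t\in\{0,\dots,t_0\}$ directly via $Z_t\le t\delta_{\max}\le D$, hence $V_t\le e^{rD}e^{-r\theta}\le e^{rD}/(1-e^{-r\eta})$. Equivalently, run your block argument separately on each residue class $j$ modulo $t_0$, using $Z_j\le j\delta_{\max}$ to verify the base case. Either way the interpolation term disappears and you recover the constant $\theta+t_0\delta_{\max}$ exactly as in the paper's proof. (The discrepancy is harmless for the paper's asymptotics, since the lemma's right-hand side only enters $\psi(\sigma,\alpha,s)$ through its order in $\sigma$, $\alpha$ and $s$, but as stated the lemma claims the sharper constant, so the proof should produce it.)
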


If we take $\theta=\vartheta (\sigma,\alpha,s)$, $\delta_{\max}=\sigma \beta_0$, $\zeta =  \frac{\sigma}{2}\varepsilon_0$ and $t_0=s$,
we can observe from $\beta_0\geq\nu_g\geq\varepsilon_0$ and Lemma \ref{lem:l5} that  the conditions in Lemma \ref{lem:l7} are satisfied in terms of $\|\lambda^t\|$.
For convenience, let us introduce
\[
\psi(\sigma,\alpha,s):=
\vartheta (\sigma,\alpha,s)+  \left[\beta_0+  \frac{8\beta_0^2}{\varepsilon_0}\log  \frac{32\beta_0^2}{\varepsilon_0^2}\right] \sigma s.
\]
We can verify that the right-hand side of (\ref{eq:re-ineq})  equals exactly to $\psi(\sigma,\alpha,s)$, that is,
\[
 \theta +t_0 \delta_{\max}+t_0   \frac{4 \delta_{\max}^2}{\zeta}\log \left[   \frac{8 \delta_{\max}^2}{\zeta^2} \right]=\psi(\sigma,\alpha,s).
\]
Therefore, from Lemma \ref{lem:l5} and Lemma \ref{lem:l7} we directly derive the following useful result.
\begin{proposition}\label{prop:lambda}
Let  Assumptions \ref{assu:lip}, \ref{assu:slater}, \ref{assu:psd}, \ref{assu:q}, \ref{assu:matrix}, \ref{assu:convex} be satisfied. Then, for any arbitrary integer $s > 0$,
 the following inequality holds
\[
\|\lambda^t\|\leq \psi(\sigma,\alpha,s).
\]
\end{proposition}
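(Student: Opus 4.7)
The plan is to invoke Lemma~\ref{lem:l7} applied to the non-negative scalar sequence $Z_t := \|\lambda^{t+1}\|$ (so that $Z_0 = \|\lambda^1\| = 0$ matches the hypothesis $Z_0 = 0$). The work of the proof has essentially been done in Lemma~\ref{lem:l5}; the present statement is a packaging of those two facts into a uniform bound.

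First I would make the identifications
\[
\theta := \vartheta(\sigma,\alpha,s), \qquad \delta_{\max} := \sigma\beta_0, \qquad \zeta := \tfrac{\sigma\varepsilon_0}{2}, \qquad t_0 := s.
\]
The admissibility condition $0 < \zeta \leq \delta_{\max}$ is exactly $\varepsilon_0/2 \leq \beta_0$, which follows from the remark after the assumptions that $\beta_0 \geq \nu_g \geq \varepsilon_0$. Inequality~(\ref{eq:6}) of Lemma~\ref{lem:l5} supplies the one-step bound $|Z_{t+1} - Z_t| \leq \delta_{\max}$, and inequality~(\ref{eq:7}) supplies the drift condition~(\ref{eq:Y9}): whenever $Z_t = \|\lambda^{t+1}\| \geq \theta$, we have $Z_{t+s} - Z_t \leq -s\zeta$.

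Next I would apply Lemma~\ref{lem:l7} to obtain
\[
\|\lambda^{t+1}\| \;=\; Z_t \;\leq\; \theta + t_0\delta_{\max} + t_0 \,\frac{4\delta_{\max}^2}{\zeta}\log\!\left[\frac{8\delta_{\max}^2}{\zeta^2}\right]
\]
for every $t \geq 1$, and then carry out the substitution to check that the right-hand side equals $\psi(\sigma,\alpha,s)$. Since
\[
\frac{4\delta_{\max}^2}{\zeta} \;=\; \frac{4\sigma^2\beta_0^2}{\sigma\varepsilon_0/2} \;=\; \frac{8\sigma\beta_0^2}{\varepsilon_0}, \qquad \frac{8\delta_{\max}^2}{\zeta^2} \;=\; \frac{8\sigma^2\beta_0^2}{\sigma^2\varepsilon_0^2/4} \;=\; \frac{32\beta_0^2}{\varepsilon_0^2},
\]
the bracketed expression reduces to $\vartheta(\sigma,\alpha,s) + \bigl[\beta_0 + \tfrac{8\beta_0^2}{\varepsilon_0}\log\tfrac{32\beta_0^2}{\varepsilon_0^2}\bigr]\sigma s$, which is precisely $\psi(\sigma,\alpha,s)$. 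The trivial case $t=1$ holds as $\|\lambda^1\| = 0 \leq \psi(\sigma,\alpha,s)$, so the conclusion follows for all $t$.

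The only subtlety I anticipate is the indexing alignment between Lemma~\ref{lem:l7} (whose sequence starts at $Z_0$) and Algorithm~\ref{algo:OPMM} (whose multiplier sequence starts at $\lambda^1 = 0$); shifting by one index resolves this without modification of the lemma. Everything else—the admissibility of $\zeta$, the drift inequality, and the algebraic identity between the right-hand side of (\ref{eq:re-ineq}) and $\psi(\sigma,\alpha,s)$—is a direct bookkeeping step once the parameters are identified.
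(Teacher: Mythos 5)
Your proposal is correct and is essentially identical to the paper's own argument: the paper likewise sets $\theta=\vartheta(\sigma,\alpha,s)$, $\delta_{\max}=\sigma\beta_0$, $\zeta=\tfrac{\sigma}{2}\varepsilon_0$, $t_0=s$, checks admissibility via $\beta_0\geq\nu_g\geq\varepsilon_0$, invokes Lemma~\ref{lem:l5} for the one-step and drift conditions, and verifies that the right-hand side of (\ref{eq:re-ineq}) equals $\psi(\sigma,\alpha,s)$. Your explicit treatment of the index shift ($Z_0=\|\lambda^1\|=0$) is a minor point the paper leaves implicit, but it changes nothing of substance.
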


Finally, if we define
\begin{equation}\label{eq:notations}
\begin{array}{ll}
\kappa_0= \frac{\left(2\kappa_fD_0+\kappa_{q}D_0^2\right)}{\varepsilon_0},\,\,
\kappa_1= \frac{D_0^2}{\varepsilon_0},\,\,
\kappa_2= \frac{ \nu_g^2}{\varepsilon_0}-\beta_0,\\[10pt]
\kappa_3=\left[2\beta_0+ \frac{\varepsilon_0}{2}+ \frac{8\beta_0^2}{\varepsilon_0}\log  \frac{32\beta_0^2}{\varepsilon_0^2}\right],
\end{array}
\end{equation}
then $\psi(\sigma,\alpha,s)$ can be rewritten as
\[
\psi(\sigma,\alpha,s)=\kappa_0+\kappa_1 \frac{\alpha}{s}+\kappa_2
 \sigma+\kappa_3 \sigma s.
\]

\section{Regret Analysis of OPMM}\label{sec:regret}
In this section, we establish the regret bounds of the proposed algorithm. In particular, we focus on estimating the following three regrets: regret of Lagrangian residual,  regret of constraint violation and regret of complementarity residual.
The following proposition establishes an upper bound of the so-called Lagrangian residual.
\begin{proposition}\label{prop:kkt}
Let Assumptions \ref{assu:lip}, \ref{assu:smooth}, \ref{assu:matrix} be satisfied. Then, there exists a vector $w^{t+1} \in N_{\cC}(x^{t+1})$ such that
\begin{equation}\label{eq:kkt}
 \left\| \sum_{t=1}^T\cH_t\right\|
\leq
2 \kappa_f+ \frac{\kappa_{q}^2}{2\beta}T+ \frac{(1+p)\beta}{2}
\sum_{t=1}^T\|x^{t+1}-x^t\|^2+ \frac{(L_g+\kappa_{q})^2}{2\beta}
\sum_{t=1}^T\|\lambda^{t+1}\|^2+\alpha D_0,
\end{equation}
where  $\beta>0$ is an arbitrary scalar and
\[
\cH_t:=\nabla f_{t+1}(x^{t+1})+ \sum_{i=1}^p \lambda^{t+1}_i \nabla g_i(x^{t+1})+w^{t+1}.
\]
\end{proposition}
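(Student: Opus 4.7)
The plan is to start from the optimality condition of the subproblem (\ref{xna}) at $x^{t+1}$ and rewrite it so that the Lagrangian gradient at $(x^{t+1},\lambda^{t+1})$ evaluated with $f_{t+1}$ and $g_i$ appears naturally, with the ``error'' terms being telescoping differences, curvature terms $\Theta_i^t(x^{t+1}-x^t)$, and a proximal term $\alpha(x^{t+1}-x^t)$.

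First I would differentiate the inner minimization in (\ref{xna}). Because $\nabla_x q_i^t(x^{t+1}) = \nabla g_i(x^t) + \Theta_i^t(x^{t+1}-x^t)$ and similarly for $q_0^t$, and because $[\lambda_i^t + \sigma q_i^t(x^{t+1})]_+ = \lambda_i^{t+1}$, the first-order condition at $x^{t+1}$ yields some $w^{t+1} \in N_{\cC}(x^{t+1})$ satisfying
\[
0 = \nabla f_t(x^t) + \Theta_0^t(x^{t+1}-x^t) + \sum_{i=1}^p \lambda_i^{t+1}\bigl[\nabla g_i(x^t) + \Theta_i^t(x^{t+1}-x^t)\bigr] + \alpha(x^{t+1}-x^t) + w^{t+1}.
\]
Solving for $w^{t+1}$ and plugging into $\cH_t = \nabla f_{t+1}(x^{t+1}) + \sum_i \lambda_i^{t+1}\nabla g_i(x^{t+1}) + w^{t+1}$ produces the identity
\[
\cH_t = \bigl[\nabla f_{t+1}(x^{t+1}) - \nabla f_t(x^t)\bigr] - \Theta_0^t(x^{t+1}-x^t) + \sum_{i=1}^p \lambda_i^{t+1}\bigl[\nabla g_i(x^{t+1}) - \nabla g_i(x^t) - \Theta_i^t(x^{t+1}-x^t)\bigr] - \alpha(x^{t+1}-x^t).
\]

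Next I would sum over $t=1,\ldots,T$ and bound each group in norm. The first bracket telescopes to $\nabla f_{T+1}(x^{T+1}) - \nabla f_1(x^1)$, which is bounded by $2\kappa_f$ using the Lipschitz bound on $f_t$ (Assumption \ref{assu:lip}). The $\alpha$ bracket telescopes to $-\alpha(x^{T+1}-x^1)$, whose norm is bounded by $\alpha D_0$. For the $\Theta_0^t$ term, Assumption \ref{assu:matrix} and Young's inequality give $\kappa_q\|x^{t+1}-x^t\| \leq \frac{\kappa_q^2}{2\beta} + \frac{\beta}{2}\|x^{t+1}-x^t\|^2$, producing $\frac{\kappa_q^2}{2\beta}T$ plus a quadratic piece. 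For the $\sum_i \lambda_i^{t+1}[\,\cdot\,]$ term, the Lipschitz bound on $\nabla g_i$ (Assumption \ref{assu:smooth}) and Assumption \ref{assu:matrix} give $\|\nabla g_i(x^{t+1})-\nabla g_i(x^t) - \Theta_i^t(x^{t+1}-x^t)\| \leq (L_g+\kappa_q)\|x^{t+1}-x^t\|$, and then a vector-Cauchy--Schwarz together with Young's inequality applied to $\sqrt{p}\|\lambda^{t+1}\|(L_g+\kappa_q)\|x^{t+1}-x^t\|$ yields $\frac{(L_g+\kappa_q)^2\|\lambda^{t+1}\|^2}{2\beta} + \frac{p\beta}{2}\|x^{t+1}-x^t\|^2$. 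Collecting the two quadratic pieces gives the coefficient $\frac{(1+p)\beta}{2}$ on $\sum_t\|x^{t+1}-x^t\|^2$, and assembling all the constants reproduces (\ref{eq:kkt}).

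The only delicate step is the careful bookkeeping when computing $\nabla_x\cL_\sigma^t(x^{t+1},\lambda^t)$: one must verify that the $\|\lambda\|^2$ term in (\ref{augL}) does not contribute (it depends only on $\lambda$), and that the chain rule on $\frac{1}{2\sigma}[\lambda_i^t+\sigma q_i^t(x)]_+^2$ produces the clean factor $[\lambda_i^t+\sigma q_i^t(x^{t+1})]_+$, which is precisely $\lambda_i^{t+1}$. Once this identification is made, everything reduces to telescoping plus two applications of Young's inequality with a free parameter $\beta>0$; there is no other real obstacle.
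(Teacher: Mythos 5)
Your proposal is correct and follows essentially the same route as the paper's proof: write the first-order optimality condition of (\ref{xna}) using the identity $[\lambda_i^t+\sigma q_i^t(x^{t+1})]_+=\lambda_i^{t+1}$, solve for $\cH_t$ so that the error terms are telescoping differences plus curvature and proximal terms, then sum, telescope the $\nabla f$ and $\alpha$ pieces, and apply Young's inequality with the free parameter $\beta$ to the $\Theta_0^t$ and $\lambda^{t+1}$ terms exactly as the paper does. No substantive differences.
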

\begin{proof} It follows from the optimality conditions of (\ref{xna}) that
$$
0 \in \nabla q^t_0(x^{t+1})+{\cal J}q^t(x^{t+1})^T\lambda^{t+1}+\alpha(x^{t+1}-x^t)+N_{\cC}(x^{t+1}).
$$
Equivalently, from the definitions of $q_0^t$ and $q^t$, there exists $w^{t+1} \in N_{\cC}(x^{t+1})$ such that
\begin{equation}\label{hel1}
0=\nabla f_t(x^{t})+\Theta_0^t(x^{t+1}-x^t)+\sum_{i=1}^p\lambda_i^{t+1}[\nabla g_i(x^t)+\Theta_i^t(x^{t+1}-x^t)]+\alpha(x^{t+1}-x^t)+w^{t+1}.
\end{equation}
In view of definitions of $q^t_i$, $i=0,1,\ldots,p$ and $\cH_t$, we may rewrite (\ref{hel1}) as
$$
\begin{array}{ll}
0= & \cH_t +[\nabla f_t(x^t)-\nabla f_{t+1}(x^{t+1})]+ \sum_{i=1}^p \lambda^{t+1}_i (\nabla g_i(x^t)-\nabla g_i(x^{t+1}))\\[10pt]
&\quad + \Theta^t_0(x^{t+1}-x^t)+ \sum_{i=1}^p \lambda^{t+1}_i\Theta^t_i(x^{t+1}-x^t)+\alpha(x^{t+1}-x^t).
\end{array}
$$
Making a summation, we obtain
$$
\begin{array}{ll}
&-\sum_{t=1}^T \cH_t \\[10pt] &=  [\nabla f_1(x^1)-\nabla f_{T+1}(x^{T+1})]+ \sum_{t=1}^T\left[ \sum_{i=1}^p \lambda^{t+1}_i (\nabla g_i(x^t)-\nabla g_i(x^{t+1}))\right]\\[10pt]
&\quad +  \sum_{t=1}^T\Theta^t_0(x^{t+1}-x^t)+ \sum_{t=1}^T\left[ \sum_{i=1}^p \lambda^{t+1}_i\Theta^t_i(x^{t+1}-x^t)\right]+\alpha(x^{T+1}-x^1).
\end{array}
$$
Therefore, it follows from Assumptions \ref{assu:lip}, \ref{assu:smooth}, \ref{assu:matrix} that
$$
\begin{array}{l}
 \left\| \sum_{t=1}^T \cH_t\right\|\\[10pt]
\leq [\|\nabla f_1(x^1)\|+\|\nabla f_{T+1}(x^{T+1})\|]+ \sum_{t=1}^T\left[ \sum_{i=1}^p \lambda^{t+1}_i\|\nabla g_i(x^t)-\nabla g_i(x^{t+1})\|\right]\\[10pt]
\quad +  \sum_{t=1}^T\|\Theta^t_0\|\|x^{t+1}-x^t\|+ \sum_{t=1}^T\left[ \sum_{i=1}^p \lambda^{t+1}_i\|\Theta^t_i\|\|x^{t+1}-x^t\|\right]+\alpha D_0\\[10pt]
\leq 2\kappa_f + \kappa_{q}  \sum_{t=1}^T\|x^{t+1}-x^t\|+( L_g+\kappa_{q})
 \sum_{t=1}^T\left[ \sum_{i=1}^p \lambda^{t+1}_i\|x^{t+1}-x^t\|\right]+\alpha D_0\\[10pt]
\leq 2 \kappa_f+ \frac{\kappa_{q}^2}{2\beta}T+ \frac{\beta}{2}
\sum_{t=1}^T\|x^{t+1}-x^t\|^2+ \frac{(L_g+\kappa_{q})^2}{2\beta}
\sum_{t=1}^T\|\lambda^{t+1}\|^2 \\[10pt]\quad+ \frac{p\beta}{2}
\sum_{t=1}^T\|x^{t+1}-x^t\|^2+\alpha D_0\\[10pt]
=2 \kappa_f+ \frac{\kappa_{q}^2}{2\beta}T+ \frac{(1+p)\beta}{2}
\sum_{t=1}^T\|x^{t+1}-x^t\|^2+ \frac{(L_g+\kappa_{q})^2}{2\beta}
\sum_{t=1}^T\|\lambda^{t+1}\|^2+\alpha D_0,
\end{array}
$$
which proves (\ref{eq:kkt}).
 \end{proof}

Combining the results in Proposition \ref{prop:cregret}, Proposition \ref{prop:lambda} and Proposition \ref{prop:kkt}, we present the main theorem of this section.
\begin{theorem}\label{th:regrets}
Let Assumptions \ref{assu:lip}, \ref{assu:smooth}, \ref{assu:slater}, \ref{assu:psd}, \ref{assu:q}, \ref{assu:matrix}, \ref{assu:convex} be satisfied.  Then,  for $\sigma=T^{-1/4}$ and  $\alpha=T^{1/4}$,  the following assertions hold.
\begin{itemize}
\item[(i)] There exists a vector $w^{t+1} \in N_{\cC}(x^{t+1})$ such that the regret of Lagrangian residual is bounded by
\[
\left\|  \frac{1}{T}\sum_{t=1}^T \left[\nabla f_{t+1}(x^{t+1})+ \sum_{i=1}^p \lambda^{t+1}_i \nabla g_i(x^{t+1})+w^{t+1} \right]\right\|\\[10pt]
  \leq \varrho_0 T^{-1/8}+{\rm o}(T^{-1/8}),
\]
where
$$
\begin{array}{l}
\varrho_0=\frac{\kappa_{q}^2}{2}+2(1+p)\nu_g(\kappa_0+\kappa_1+\kappa_3)+\frac{(L_g+\kappa_{q})^2}{2}(\kappa_0+\kappa_1+\kappa_3)^2.
\end{array}
$$
\item[(ii)] The regret of constraint violation is
\[
 \frac{1}{T}\sum_{t=1}^T g_i(x^t)\leq (\nu_g(\kappa_0+\kappa_1+\kappa_3)+\kappa_g^2)  T^{-1/8}+{\rm o}(T^{-1/8}).
\]
\item[(iii)] The regret of complementarity residual is
\[
 \frac{1}{T}\sum_{t=1}^T\|\lambda^{t+1}-[\lambda^{t+1}+\sigma g(x^{t+1})]_+\| \leq \beta_0   T^{-1/4}+{\mathrm o}(T^{-1/4}).
\]
\end{itemize}
\end{theorem}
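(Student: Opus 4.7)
The plan is to combine the three propositions already established (Proposition~\ref{prop:kkt} for the Lagrangian residual, Proposition~\ref{prop:cregret} for the constraint violation, and Proposition~\ref{prop:lambda} for a uniform bound on $\|\lambda^t\|$) with Lemma~\ref{lem:3} (the displacement sum $\sum_t\|x^{t+1}-x^t\|^2$) and to substitute the prescribed rates $\sigma=T^{-1/4}$, $\alpha=T^{1/4}$. Three auxiliary parameters remain free --- the integer $s$ in Proposition~\ref{prop:lambda}, the scalar $\gamma$ in Proposition~\ref{prop:cregret}, and the scalar $\beta$ in Proposition~\ref{prop:kkt} --- and each will be tuned to balance competing powers of $T$. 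My first step is to take $s=\lceil T^{1/4}\rceil$, so that $\alpha/s=\cO(1)$, $\sigma s=\cO(1)$, and $\sigma\to 0$; this yields the uniform multiplier bound $\|\lambda^t\|\le C+o(1)$ with $C:=\kappa_0+\kappa_1+\kappa_3$, and hence $\sum_t\|\lambda^{t+1}\|^2\le C^2 T(1+o(1))$ and $\sum_t\|\lambda^t\|\le CT(1+o(1))$.

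For part~(i), I would feed this multiplier bound into Lemma~\ref{lem:3}; the $\nu_g\sum_t\|\lambda^t\|$ term dominates, giving $\sum_t\|x^{t+1}-x^t\|^2\le 4\nu_g C\,T^{3/4}+o(T^{3/4})$. Inserting this and the bound on $\sum_t\|\lambda^{t+1}\|^2$ into Proposition~\ref{prop:kkt}, the three leading terms on the right-hand side become $\frac{\kappa_{q}^2}{2\beta}T$, $2(1+p)\nu_g C\,\beta T^{3/4}$, and $\frac{(L_g+\kappa_{q})^2 C^2}{2\beta}T$, while $2\kappa_f$ and $\alpha D_0=T^{1/4}D_0$ are of lower order. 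Choosing $\beta=T^{1/8}$ equalizes all three at order $T^{7/8}$; dividing by $T$ and collecting coefficients then reproduces exactly the stated constant $\varrho_0$.

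Part~(iii) is immediate: since $\lambda^{t+1}\ge 0$ we have $\lambda^{t+1}=[\lambda^{t+1}]_+$, and the $1$-Lipschitz property of $[\cdot]_+$ gives $\|\lambda^{t+1}-[\lambda^{t+1}+\sigma g(x^{t+1})]_+\|\le\sigma\|g(x^{t+1})\|\le \sigma\nu_g\le\sigma\beta_0$ for each $t$, so the time-average is at most $\beta_0 T^{-1/4}$. For part~(ii), I would apply Proposition~\ref{prop:cregret} directly: after dividing by $T$, the boundary term $\lambda_i^{T+1}/(\sigma T)\le CT^{-3/4}$ is negligible, and the remaining effective contributions are $\gamma\kappa_g^2$ and $\frac{1}{\alpha\gamma}\cdot\frac{\nu_g}{T}\sum_t\|\lambda^t\|\le \nu_g C/(\alpha\gamma)$. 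Balancing these requires $\gamma=T^{-1/8}$ (recalling $\alpha=T^{1/4}$), so both collapse to order $T^{-1/8}$ with combined leading constant $\nu_g C+\kappa_g^2$, matching the claim.

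The hardest part will be the algebraic bookkeeping in part~(i): three distinct terms contain $\beta$ with opposite exponents and must be balanced simultaneously against the $T^{3/4}$ displacement bound, while the constants inside $\varrho_0$ have to be tracked precisely. I will also need to verify that the lower-order pieces --- the $\kappa_2\sigma$ term inside $\psi$, the $\cO(T^{1/2})$ summands in Lemma~\ref{lem:3}, and the $\alpha D_0$ contribution in Proposition~\ref{prop:kkt} --- do not contaminate the leading constants $\varrho_0$ and $\nu_g C+\kappa_g^2$. This requires the refined statement $\|\lambda^t\|\le C+o(1)$ rather than merely $\|\lambda^t\|=\cO(1)$, so that the constants emerge exactly and not only as upper bounds.
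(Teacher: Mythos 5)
Your proposal is correct, and for parts (i) and (ii) it is essentially the paper's own proof: the same choice $s\sim T^{1/4}$ in Proposition~\ref{prop:lambda} to get $\|\lambda^t\|\le \kappa_0+\kappa_1+\kappa_3+{\rm o}(1)$, the same substitution of that bound into Lemma~\ref{lem:3} and Proposition~\ref{prop:kkt} with $\beta=T^{1/8}$, and the same $\gamma=T^{-1/8}$ in Proposition~\ref{prop:cregret}; the constants $\varrho_0$ and $\nu_g(\kappa_0+\kappa_1+\kappa_3)+\kappa_g^2$ come out exactly as in the paper. Where you genuinely diverge is part (iii): the paper starts from the update $\lambda^{t+1}=[\lambda^t+\sigma q^t(x^{t+1})]_+$, invokes Lemma~\ref{lem:1} for $\|\lambda^{t+1}-\lambda^t\|\le\sigma\beta_0$, estimates the mismatch $\|g(x^{t+1})-q^t(x^{t+1})\|\le\frac{\sqrt{p}(L_g+\kappa_q)}{2}\|x^{t+1}-x^t\|^2$ via Assumptions \ref{assu:smooth} and \ref{assu:matrix}, and then controls the resulting displacement sum with Lemma~\ref{lem:3}. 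Your argument instead writes $\lambda^{t+1}=[\lambda^{t+1}]_+$ and applies nonexpansiveness of $[\cdot]_+$ directly, giving the per-round bound $\sigma\|g(x^{t+1})\|\le\sigma\nu_g\le\sigma\beta_0$ with no ${\rm o}(T^{-1/4})$ correction and no use of Lemmas~\ref{lem:1} or~\ref{lem:3} at all. This is shorter and yields a marginally sharper constant; the trade-off is that it also reveals the bound in (iii) holds for \emph{any} nonnegative multiplier sequence and bounded $g$, i.e.\ it uses no property of the iterates, whereas the paper's route at least ties the residual to the algorithm's dynamics. Both are valid proofs of the stated inequality.
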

\begin{proof} It follows from Proposition \ref{prop:lambda} that
\[
\|\lambda^t\|\leq \psi(\sigma,\alpha,s)=\kappa_0+\kappa_1 \frac{\alpha}{s}+ \kappa_2
 \sigma+\kappa_3 \sigma s,
\]
where $\kappa_0,\kappa_1,\kappa_2,\kappa_3$ are defined by (\ref{eq:notations}).
For $\sigma=T^{-1/4}$ and $\alpha=T^{1/4}$, we take $s=T^{1/4}$ and hence
\begin{equation}\label{help12}
\|\lambda^t\|\leq\kappa_0+\kappa_1+\kappa_3+\kappa_2T^{-1/4}.
\end{equation}
Combining the results in Proposition \ref{prop:kkt}  and Lemma \ref{lem:3}, we have
\[
\begin{array}{ll}
 \left\| \frac{1}{T}\sum_{t=1}^T\cH_t\right\|&
\leq
\frac{2 \kappa_f}{T}+ \frac{\kappa_{q}^2}{2\beta}+ \frac{2(1+p)\beta}{\alpha T}
\left[  \frac{T}{\alpha}\kappa_f^2+\nu_g \sum_{t=1}^T\|\lambda^t\|+\frac{\sigma}{2}\nu_g^2T\right]\\[10pt]
&\quad\quad+ \frac{(L_g+\kappa_{q})^2}{2\beta T}
\sum_{t=1}^T\|\lambda^{t+1}\|^2+\frac{\alpha D_0}{T}.
\end{array}
\]
Taking $\beta=T^{1/8}$ and using (\ref{help12}), we obtain
\[
\begin{array}{ll}
 \left\| \frac{1}{T}\sum_{t=1}^T\cH_t\right\|\\[8pt]
\leq \frac{\kappa_{q}^2}{2}T^{-1/8}+2(1+p)T^{-1/8}[\kappa_f^2T^{-1/4}+\nu_g(\kappa_0+\kappa_1+\kappa_3+\kappa_2T^{-1/4})+\frac{\nu_g^2}{2}T^{-1/4}]\\[8pt]
\quad  +2 \kappa_fT^{-1}+\frac{(L_g+\kappa_{q})^2}{2}T^{-1/8}(\kappa_0+\kappa_1+\kappa_3+\kappa_2T^{-1/4})^2+T^{-3/4}D_0\\[8pt]
=\left[\frac{\kappa_{q}^2}{2}+2(1+p)\nu_g(\kappa_0+\kappa_1+\kappa_3)+\frac{(L_g+\kappa_{q})^2}{2}(\kappa_0+\kappa_1+\kappa_3)^2\right]T^{-1/8}+o(T^{-1/8}),
\end{array}
\]
which yields item (i).

From Proposition \ref{prop:cregret}, one has
\[
\frac{1}{T} \sum_{t=1}^T g_i(x^t)\leq   \frac{1}{\sigma T}\lambda^{T+1}_i+ \gamma\kappa_g^2+\frac{1}{\alpha T}\left[  \frac{1}{\gamma}+2\kappa_{q}\right] \left[  \frac{\kappa_f^2}{\alpha}T+\nu_g \sum_{t=1}^T\|\lambda^t\|+\frac{\sigma}{2}\nu_g^2T\right].
\]
Taking $\gamma=T^{-1/8}$ and using (\ref{help12}), we obtain
\[
\begin{array}{ll}
&\frac{1}{T} \sum_{t=1}^T g_i(x^t)\\[10pt]&\leq \left[T^{-1/8}+2\kappa_{q}T^{-1/4}\right] \left[\kappa_f^2T^{-1/4}+\nu_g(\kappa_0+\kappa_1+\kappa_3+\kappa_2T^{-1/4})+\frac{\nu_g^2}{2}T^{-1/4}\right] \\[10pt]
&\quad\quad+T^{-3/4}(\kappa_0+\kappa_1+\kappa_3+\kappa_2T^{-1/4})+ \kappa_g^2T^{-1/8},
\end{array}
\]
which proves item (ii).

Finally, we consider item (iii). First of all, we estimate $\|g(x^{t+1})- q^t(x^{t+1})\|$.  From Assumption  \ref{assu:smooth} and Assumption \ref{assu:matrix}, we have
\[
\begin{array}{ll}
\|g(x^{t+1})- q^t(x^{t+1})\|&=\left( \sum_{i=1}^p (g_i(x^{t+1})-q^t_i(x^{t+1}))^2\right)^{1/2}\\[10pt]
& \leq \left( \sum_{i=1}^p \left(
 \frac{L_g+\kappa_{q}}{2} \|x^{t+1}-x^t\|^2
 \right)^2\right)^{1/2}\\[10pt]
 &=\frac{\sqrt{p}(L_g+\kappa_{q})}{2}\|x^{t+1}-x^t\|^2.
\end{array}
\]
Then, from the definition of $\lambda^{t+1}$ and Lemma \ref{lem:1} we obtain
\[
\begin{array}{l}
\|\lambda^{t+1}-[\lambda^{t+1}+\sigma g(x^{t+1})]_+\|
= \|[\lambda^t +\sigma q^t(x^{t+1})]_+-[\lambda^{t+1}+\sigma g(x^{t+1})]_+\|\\[8pt]
\leq\|\lambda^{t+1}-\lambda^t +\sigma [g(x^{t+1})- q^t(x^{t+1})]\|\\[8pt]
\leq \beta_0\sigma +\frac{\sqrt{p}(L_g+\kappa_{q})\sigma}{2}\|x^{t+1}-x^t\|^2.
\end{array}
\]
Taking a summation and using Lemma  \ref{lem:3}, one has
\[
\begin{array}{ll}
\frac{1}{T}\sum_{t=1}^T\|\lambda^{t+1}-[\lambda^{t+1}+\sigma g(x^{t+1})]_+\|\\[10pt]\leq\beta_0\sigma +\frac{2\sqrt{p}(L_g+\kappa_{q})\sigma}{\alpha T}\left[ \frac{T}{\alpha}\kappa_f^2+\nu_g \sum_{t=1}^T\|\lambda^t\|+\frac{\sigma}{2}\nu_g^2T\right].
\end{array}
\]
Therefore, it follows from (\ref{help12}) that
\[
\begin{array}{ll}
\frac{1}{T}\sum_{t=1}^T\|\lambda^{t+1}-[\lambda^{t+1}+\sigma g(x^{t+1})]_+\|\\[8pt]
\leq 2\sqrt{p}(L_g+\kappa_{q})T^{-1/2}[\kappa_f^2T^{-1/4}+\nu_g(\kappa_0+\kappa_1+\kappa_3+\kappa_2T^{-1/4})+\frac{\nu_g^2}{2}T^{-1/4}]\\[10pt]
\quad\quad+\beta_0T^{-1/4},
\end{array}
\]
which completes the proof of item (iii).
\end{proof}

In the rest of this section, we analyze the objective reduction regret of the proposed algorithm under a setting where the objective function is a quadratic convex function, that is, $f_t(x)=q_0^t(x)$ and Assumption \ref{assu:psd} holds true. We emphasize that although the objective function is assumed to be convex, the feasible set $\Phi$ may still be non-convex.
\begin{proposition}\label{prop:qua-obj}
Let   Assumptions \ref{assu:lip}, \ref{assu:psd}, \ref{assu:q} be satisfied and $f_t(x)=q_0^t(x)$ for all $x\in\cC$. Let $\sigma=T^{-1/2}$ and $\alpha=T^{1/2}$. The following estimation holds:
\[
 \frac{1}{T}\sum_{t=1}^Tf_t(x^t)-\inf_{z \in \Phi}  \frac{1}{T}\sum_{t=1}^Tf_t(z) \leq
\left(\kappa_f^2+
 \frac{1}{2} \nu_g^2+\frac{1}{2}{\rm dist}^2\, (x^1, S^*)\right)T^{-1/2},
\]
where $S^*$ is the set of optimal solutions given by $S^*:=\argmin_{x\in\Phi}\sum_{t=1}^Tf_t(x)$.
\end{proposition}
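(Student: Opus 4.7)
The plan is to exploit that $x^{t+1}$ is the global minimizer of the subproblem (\ref{xna}) and compare its value against an offline optimum $z^{*}\in S^{*}$. The key ingredients will be: (i) Assumption B2 forcing the quadratic approximations $q_i^t$ to majorize nothing worse than $g_i$ at feasible points, (ii) convexity of $q_0^t=f_t$ from Assumption B1, and (iii) the $\alpha$-strongly-convex proximal regularizer $\tfrac{\alpha}{2}\|x-x^t\|^2$, which will produce a telescoping three-point inequality in $\|z^{*}-x^t\|^2$.

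First I would fix $z^{*}\in S^{*}\subset\Phi$ and evaluate the argmin-inequality for (\ref{xna}) against $z^{*}$. Since $\lambda^{t}\ge 0$ and $q_i^t(z^{*})\le g_i(z^{*})\le 0$ by B2 and feasibility, the standard clipping argument gives
\[
\sum_{i=1}^{p}[\lambda_i^{t}+\sigma q_i^t(z^{*})]_{+}^{2}\le \|\lambda^{t}\|^{2}+\sigma^{2}\nu_g^{2},
\]
yielding $\mathcal{L}^{t}_{\sigma}(z^{*},\lambda^{t})\le f_t(z^{*})+\tfrac{\sigma}{2}\nu_g^{2}$ (the $\sigma\nu_g^{2}$ slack is kept, and will be the source of the $\tfrac{1}{2}\nu_g^{2}T^{-1/2}$ term in the final estimate). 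Together with $\mathcal{L}^{t}_{\sigma}(x^{t+1},\lambda^{t})=f_t(x^{t+1})+\tfrac{1}{2\sigma}[\|\lambda^{t+1}\|^{2}-\|\lambda^{t}\|^{2}]$, the argmin inequality becomes
\[
f_t(x^{t+1})-f_t(z^{*})\le\tfrac{1}{2\sigma}[\|\lambda^{t}\|^{2}-\|\lambda^{t+1}\|^{2}]+\tfrac{\alpha}{2}[\|z^{*}-x^t\|^{2}-\|x^{t+1}-x^t\|^{2}]+\tfrac{\sigma}{2}\nu_g^{2}.
\]

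Next I would upgrade this to a telescoping-in-$\|z^{*}-x^t\|^{2}$ bound. Using convexity of $f_t=q_0^t$ (from B1) at $x^{t+1}$ combined with the first-order optimality condition of (\ref{xna}) (which eliminates $\nabla f_t(x^{t+1})$ in favor of $-\nabla\Psi^t(x^{t+1})-\alpha(x^{t+1}-x^t)-w^{t+1}$ with $w^{t+1}\in N_{\mathcal{C}}(x^{t+1})$) and the identity $2\alpha\langle x^{t+1}-x^t,z^{*}-x^{t+1}\rangle=\alpha[\|z^{*}-x^t\|^{2}-\|z^{*}-x^{t+1}\|^{2}-\|x^{t+1}-x^t\|^{2}]$, the $\tfrac{\alpha}{2}\|z^{*}-x^t\|^{2}$ term on the right splits into a telescoping piece $\tfrac{\alpha}{2}[\|z^{*}-x^t\|^{2}-\|z^{*}-x^{t+1}\|^{2}]$ plus a remainder that is nonpositive (after invoking $q_i^t(z^{*})\le 0$ and the complementarity identity $\sigma\lambda_i^{t+1}q_i^t(x^{t+1})=\lambda_i^{t+1}(\lambda_i^{t+1}-\lambda_i^{t})$ together with Young's inequality). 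Summing $t=1,\dots,T$, the telescoping collapses to $\tfrac{\alpha}{2}\|x^{1}-z^{*}\|^{2}$, the $\|\lambda^{t}\|^{2}$ terms collapse to $-\tfrac{1}{2\sigma}\|\lambda^{T+1}\|^{2}\le 0$, and the $\tfrac{\sigma}{2}\nu_g^{2}$ per round accumulates to $\tfrac{\sigma T\nu_g^{2}}{2}$:
\[
\sum_{t=1}^{T}[f_t(x^{t+1})-f_t(z^{*})]\le \tfrac{\alpha}{2}\|x^{1}-z^{*}\|^{2}+\tfrac{\sigma T\nu_g^{2}}{2}-\tfrac{\alpha}{2}\sum_{t=1}^{T}\|x^{t+1}-x^t\|^{2}.
\]

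Finally I would bridge the off-by-one index: by A1, $f_t(x^t)\le f_t(x^{t+1})+\kappa_f\|x^t-x^{t+1}\|$, and AM-GM gives $\kappa_f\|x^t-x^{t+1}\|\le \tfrac{\kappa_f^{2}}{\alpha}+\tfrac{\alpha}{4}\|x^t-x^{t+1}\|^{2}$, so $\kappa_f\sum\|x^t-x^{t+1}\|\le \tfrac{T\kappa_f^{2}}{\alpha}+\tfrac{\alpha}{4}\sum\|x^t-x^{t+1}\|^{2}$, which absorbs cleanly into the $-\tfrac{\alpha}{2}\sum\|x^{t+1}-x^t\|^{2}$ slack and leaves a nonpositive remainder. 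Substituting $\sigma=T^{-1/2}$, $\alpha=T^{1/2}$, dividing by $T$, and taking the infimum over $z^{*}\in S^{*}$ yields the stated $T^{-1/2}$ rate with the exact constant $\kappa_f^{2}+\tfrac{1}{2}\nu_g^{2}+\tfrac{1}{2}\mathrm{dist}^{2}(x^{1},S^{*})$. The main obstacle I anticipate is the second step: extracting a telescoping $\|z^{*}-x^{t+1}\|^{2}$ term without assuming convexity of the full augmented Lagrangian $\mathcal{L}^{t}_{\sigma}(\cdot,\lambda^{t})$. One must be careful that the $\Theta_i^t$ quadratic remainders arising from expanding $q_i^t(z^{*})$ around $x^{t+1}$ are controlled—either by B2 together with the complementarity identity on $\lambda_i^{t+1}q_i^t(x^{t+1})$, or (implicitly) by the sign structure of the problem; this is the delicate bookkeeping step that must be executed carefully.
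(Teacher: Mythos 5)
Your overall architecture is right and your final constants come out correctly, but the step you yourself flag as ``delicate''---extracting the telescoping term $\frac{\alpha}{2}[\|z^*-x^t\|^2-\|z^*-x^{t+1}\|^2]$---is a genuine gap, and the route you sketch for it does not go through. If you pass to the first-order optimality condition of (\ref{xna}) and pair it with $x^{t+1}-z^*$, the multiplier term produces
\[
\sum_{i=1}^p\lambda_i^{t+1}\langle\nabla q_i^t(x^{t+1}),x^{t+1}-z^*\rangle=\sum_{i=1}^p\lambda_i^{t+1}\Big[q_i^t(x^{t+1})-q_i^t(z^*)+\tfrac12\langle\Theta_i^t(z^*-x^{t+1}),z^*-x^{t+1}\rangle\Big].
\]
Your complementarity identity and $q_i^t(z^*)\le g_i(z^*)\le 0$ handle the first two pieces, but the second-order remainder $\tfrac12\sum_i\lambda_i^{t+1}\langle\Theta_i^t(z^*-x^{t+1}),z^*-x^{t+1}\rangle$ is not sign-controlled under the stated hypotheses: Assumption \ref{assu:q} does not make $\Theta_i^t$ positively semidefinite (for non-convex $g_i$ it typically forces the opposite), so this term can contribute on the order of $\kappa_q D_0^2\,\|\lambda^{t+1}\|_1$ per round. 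After summing and dividing by $T$ that is $O(1)$, not $O(T^{-1/2})$, so it destroys the claimed rate; neither Assumption \ref{assu:q} nor Young's inequality rescues it.

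The paper avoids this entirely by never linearizing the constraint models: it uses the three-point (strong-convexity) property of the prox step at the level of function values, namely that $x^{t+1}$ also minimizes $\cL_\sigma^t(x,\lambda^t)+\frac{\alpha}{2}\left[\|x-x^t\|^2-\|x-x^{t+1}\|^2\right]$ over $\cC$, so the telescoping term appears for free while all the $q_i^t$-dependence stays packaged inside $\|[\lambda^t+\sigma q^t(z)]_+\|^2$, which Assumption \ref{assu:q} compares directly to $\|[\lambda^t+\sigma g(z)]_+\|^2\le\|\lambda^t\|^2+\sigma^2\nu_g^2$ for $z\in\Phi$. (This three-point step requires convexity of $\cL_\sigma^t(\cdot,\lambda^t)$, i.e.\ Assumption \ref{assu:convex}, which the paper invokes for the identical step in Lemma \ref{lem:l5} even though it is not listed among the proposition's hypotheses---so your instinct that something beyond the stated assumptions is needed here is sound.) Your Steps 1 and 3 are fine, and your Lipschitz-plus-AM--GM bridge from $f_t(x^{t+1})$ to $f_t(x^t)$ is a valid alternative to the paper's explicit bound on $f_t(x^t)-q_0^t(x^{t+1})-\frac{\alpha}{4}\|x^{t+1}-x^t\|^2$ (both yield the same $\kappa_f^2/\alpha$ per round); the proof closes once you replace your Step 2 by the function-value three-point argument.
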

\begin{proof} In view of (\ref{xna}), we have from Assumption \ref{assu:q} that, for all $z\in\cC$,
$$
\begin{array}{l}
q_0^t(x^{t+1})+ \frac{\alpha}{2} \|x^{t+1}-x^t\|^2 \\[8pt]
 \leq q_0^t(z)+ \frac{1}{2\sigma}\left[ \|[\lambda^t+\sigma q^t(z)]_+\|^2
-\|[\lambda^t+\sigma q^t(x^{t+1})]_+\|^2\right] \\[8pt]\quad\quad+ \frac{\alpha}{2}\left[\|z-x^t\|^2-\|z-x^{t+1}\|^2\right]\\[10pt]
\leq q_0^t(z)+ \frac{1}{2\sigma}\left[ \|[\lambda^t+\sigma g(z)]_+\|^2-\|\lambda^{t+1}\|^2\right] + \frac{\alpha}{2}\left[\|z-x^t\|^2-\|z-x^{t+1}\|^2\right].
\end{array}
$$
Rearranging terms and noticing $q_0^t(z)=f_t(z)$ we obtain
\begin{equation}\label{eq:aux1}
\begin{array}{ll}
f_t(x^t)+ \frac{\alpha}{4} \|x^{t+1}-x^t\|^2\\[8pt]
 \leq f_t(z)+\left( f_t(x^t)-q_0^t(x^{t+1}) - \frac{\alpha}{4} \|x^{t+1}-x^t\|^2\right)+ \frac{1}{2\sigma}(\|\lambda^t\|^2-\|\lambda^{t+1}\|^2) \\[10pt]
 \quad \quad +\langle \lambda^t, g(z)\rangle+\frac{\sigma}{2} \|g(z)\|^2+ \frac{\alpha}{2}\left[\|z-x^t\|^2-\|z-x^{t+1}\|^2\right].
\end{array}
\end{equation}
From Assumption \ref{assu:psd} and Assumption \ref{assu:lip} one has
\[
\begin{array}{ll}
f_t(x^t)-q_0^t(x^{t+1}) - \frac{\alpha}{4} \|x^{t+1}-x^t\|^2\\[8pt]
=\langle -\nabla f_t(x^t), x^{t+1}-x^t \rangle- \frac{1}{2}\langle \Theta^t_0( x^{t+1}-x^t), x^{t+1}-x^t \rangle- \frac{\alpha}{4} \|x^{t+1}-x^t\|^2\\[8pt]
\leq \frac{1}{\alpha}\|\nabla f_t(x^t)\|^2\leq \frac{1}{\alpha}\kappa_f^2.
\end{array}
\]
Therefore, from (\ref{eq:aux1}) and the fact that $\langle \lambda^t, g(z)\rangle\leq 0$ for all $z\in\Phi$ we have
\[
f_t(x^t)\leq f_t(z)+\frac{1}{\alpha}\kappa_f^2+ \frac{1}{2\sigma}(\|\lambda^t\|^2-\|\lambda^{t+1}\|^2)+\frac{\sigma}{2} \nu_g^2+ \frac{\alpha}{2}\left[\|z-x^t\|^2-\|z-x^{t+1}\|^2\right].
\]
Making a summation, we obtain
\[
\frac{1}{T}\sum_{t=1}^Tf_t(x^t)\leq\frac{1}{T}\sum_{t=1}^Tf_t(z)+\frac{\kappa_f^2}{\alpha}+\frac{\sigma\nu_g^2}{2}+\frac{\alpha}{2T}\|z-x^1\|^2.
\]
Hence, the claim is derived by noting that $\sigma=T^{-1/2}$ and $\alpha=T^{1/2}$.
\end{proof}
\section{OPMM for Online Optimization with Convex Constraints}\label{sec:cvx-cons}
In this section, we consider the online optimization problem with convex functional constraints, namely, the case that $g_1,\ldots, g_p$ are all convex functions. Moreover, we choose $\Theta^t_i=0$, $i=1,\ldots,p$ in OPMM.  In this case, Assumption \ref{assu:q} is naturally satisfied and Assumption \ref{assu:matrix} is reduced to the condition  $\|\Theta^t_0\|\leq \kappa_{q}$.
Further, under Assumption \ref{assu:psd}, the subproblem (\ref{xna}) is reduced to the following convex optimization problem
\begin{equation}\label{eq:subproblemt}
\min\limits_{x\in \cC} \left\{q_0^t(x)+ \frac{1}{2\sigma} \|[\lambda^t+\sigma g(x^t)+\sigma {\cal J}g(x^t) (x-x^t)]_+\|^2+\frac{\alpha}{2}\|x-x^t\|^2\right\}.
\end{equation}

For positively definite matrix $G \in {\mathbb S}^n$ and $x \in \bR^n$, we use ${\rm dist}^G_{\cC}(x)$ to denote
the weighted distance of $x$ from $\cC$, which is defined by
$$
{\rm dist}^G_{\cC}(x):=\inf_{u\in \cC} \|x-u\|_G,
$$
where $\|x\|_G=\sqrt{x^TGx}$ is the $G$-weighted norm of $x$. The $G$-weighted projection of $x$ onto $\cC$, denoted by
$\Pi^G_{\cC}(x)$, is defined by
$$
\Pi^G_{\cC}(x):=\argmin\limits_{u\in \cC}\|x-u\|_G.
$$
The following lemma is well-known, see, e.g., \cite[Example 3.31]{Beck2017}.
\begin{lemma}\label{eq:lemP}
For a closed convex set $\cC \subset \bR^n$ and a positively definite matrix $G \in {\mathbb S}^n$, let
$\pi (x):= \frac{1}{2}{\rm dist}^G_{\cC}(x)^2$.
Then, $\pi$ is continuously differentiable  and
\[
\nabla \pi (x)=G(x-\Pi^G_{\cC}(x)).
\]
\end{lemma}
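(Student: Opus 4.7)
The plan is to reduce the $G$-weighted setting to the standard Euclidean case via the change of variables $y = G^{1/2}x$, where $G^{1/2}$ is the unique symmetric positive definite square root of $G$. Under this bijection, the $G$-weighted norm satisfies $\|x\|_G = \|G^{1/2}x\|$, so the set $\cC$ corresponds to the closed convex set $\widetilde{\cC} := G^{1/2}\cC$, the distances are related by ${\rm dist}^G_{\cC}(x) = {\rm dist}_{\widetilde{\cC}}(G^{1/2}x)$, and the projections are related by $G^{1/2}\Pi^G_{\cC}(x) = \Pi_{\widetilde{\cC}}(G^{1/2}x)$. The latter identity follows because minimizing $\|x-u\|_G^2$ over $u\in\cC$ is the same as minimizing $\|G^{1/2}x - v\|^2$ over $v \in \widetilde{\cC}$ with the substitution $v = G^{1/2}u$.

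Having made this reduction, I would invoke the standard Moreau-type result: for a closed convex set $\widetilde{\cC}$ in a Euclidean space, the function $\tilde{\pi}(y) := \tfrac{1}{2}{\rm dist}_{\widetilde{\cC}}(y)^2$ is continuously differentiable with $\nabla \tilde\pi(y) = y - \Pi_{\widetilde{\cC}}(y)$. Since $\pi(x) = \tilde\pi(G^{1/2}x)$, the chain rule gives
\[
\nabla \pi(x) = G^{1/2}\bigl(G^{1/2}x - \Pi_{\widetilde{\cC}}(G^{1/2}x)\bigr) = G^{1/2}\bigl(G^{1/2}x - G^{1/2}\Pi^G_{\cC}(x)\bigr) = G\bigl(x - \Pi^G_{\cC}(x)\bigr),
\]
which is the claimed formula; continuity of $\nabla\pi$ then follows from the (Lipschitz) continuity of the Euclidean projection onto a closed convex set.

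If one prefers a self-contained derivation, the key ingredient is the variational inequality characterizing the $G$-weighted projection, namely $\langle G(x - \Pi^G_{\cC}(x)),\, u - \Pi^G_{\cC}(x)\rangle \leq 0$ for every $u \in \cC$. Applying this inequality to $x$ with test point $u = \Pi^G_{\cC}(x+h)$ and to $x+h$ with test point $u = \Pi^G_{\cC}(x)$, then adding the two, yields Lipschitz continuity of $\Pi^G_{\cC}$ in the $G$-norm; plugging $\Pi^G_{\cC}(x+h)$ into the definition of $\pi(x+h)$ and $\Pi^G_{\cC}(x)$ into the definition of $\pi(x)$ to bound $\pi(x+h)-\pi(x)$ from both sides yields the expansion $\pi(x+h) - \pi(x) = \langle G(x-\Pi^G_{\cC}(x)),\, h\rangle + o(\|h\|)$. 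I do not anticipate any genuine obstacle here: the result is a routine consequence of the projection's variational characterization, and the only care needed is to track the weight $G$ through the chain rule or, equivalently, through the variational inequality.
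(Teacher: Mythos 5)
Your proof is correct, but note that the paper itself does not prove this lemma at all: it is stated as ``well-known'' with a pointer to \cite[Example 3.31]{Beck2017}, so there is no in-paper argument to compare against. Your reduction via $y=G^{1/2}x$ is the natural way to obtain the $G$-weighted statement from the standard Euclidean fact that $\tfrac12\,{\rm dist}_{\widetilde{\cC}}(\cdot)^2$ is $C^1$ with gradient $y-\Pi_{\widetilde{\cC}}(y)$: the identities $\|x\|_G=\|G^{1/2}x\|$, ${\rm dist}^G_{\cC}(x)={\rm dist}_{G^{1/2}\cC}(G^{1/2}x)$ and $G^{1/2}\Pi^G_{\cC}(x)=\Pi_{G^{1/2}\cC}(G^{1/2}x)$ all check out, and the chain rule delivers $\nabla\pi(x)=G(x-\Pi^G_{\cC}(x))$ with continuity inherited from the nonexpansiveness of the Euclidean projection. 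Your self-contained alternative via the variational inequality $\langle G(x-\Pi^G_{\cC}(x)),\,u-\Pi^G_{\cC}(x)\rangle\le 0$ is equally valid and is essentially the textbook proof transported to the $G$-inner product; either route is a complete and acceptable justification of the lemma.
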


By introducing artificial vectors $z$ and $w$, we can express Problem (\ref{eq:subproblemt}) as the following equivalent  convex quadratic programming problem
\begin{equation}\label{eq:subQ}
\begin{array}{ll}
\min\limits_{x,z,w} & q_0^t(x)+ \frac{\alpha}{2}\|x-x^t\|^2+ \frac{1}{2\sigma}\|z\|^2\\[10pt]
{\rm s.t.} & \lambda^t+\sigma g(x^t)+\sigma {\cal J} g(x^t) (x-x^t)-z +w =0,\\[10pt]
 & x\in \cC,\ z \geq 0,\ w\geq 0.
 \end{array}
\end{equation}
The Lagrangian function of Problem (\ref{eq:subQ}) is given by
\[
\begin{array}{ll}
L^t(x,z,w,y)\\[10pt]= q_0^t(x)+ \frac{\alpha}{2}\|x-x^t\|^2+ \frac{1}{2\sigma}\|z\|^2 + \langle y, \lambda^t+\sigma g(x^t)+\sigma {\cal J}g(x^t) (x-x^t)-z +w \rangle.
\end{array}
\]
Then, the dual of Problem (\ref{eq:subQ}) is expressed as
\[
\begin{array}{l}
\max\limits_{y\in\bR^p}\inf\limits_{x\in \cC}\inf\limits_{z \geq 0}\inf\limits_{w\geq 0} L^t(x,z,w,y)\\[8pt]
=\max\limits_{y\in\bR^p}\left\{\langle y, \lambda^t+\sigma g(x^t)\rangle+\inf\limits_{x \in \cC}[q_0^t(x)+ \frac{\alpha}{2}\|x-x^t\|^2+\langle y, \sigma {\cal J}g(x^t) (x-x^t)\rangle]
\right.\\[10pt]
\quad\quad\quad\quad+\inf\limits_{w\geq 0}y^Tw+\left.\inf\limits_{z \geq 0}\left[ \frac{1}{2\sigma}\|z\|^2 - \langle y, z  \rangle\right] \right\}\\[10pt]
=\max\limits_{y\geq 0}\left\{\langle y, \lambda^t+\sigma g(x^t)\rangle- \frac{\sigma}{2} \|y\|^2 +f_t(x^t)- \frac{1}{2} \| \nabla f_t(x^t)+\sigma {\cal J}g(x^t)^Ty\|^2_{[H^t]^{-1}}
\right.\\[10pt]
\quad\quad\quad\quad+\left.\inf_{x \in \cC}\left[
   \frac{1}{2}\left \|x-\Big(x^t-[H^t]^{-1}(\nabla f_t(x^t)+\sigma {\cal J}g(x^t)^Ty)\Big)\right\|^2_{H^t}
  \right] \right\}\\[10pt]

  =\max\limits_{y\geq 0}\left\{\langle y, \lambda^t+\sigma g(x^t)\rangle- \frac{\sigma}{2} \|y\|^2 +f_t(x^t)- \frac{1}{2} \| \nabla f_t(x^t)+\sigma {\cal J}g(x^t)^Ty\|^2_{[H^t]^{-1}}
  \right.\\[10pt]
\quad \quad +\left.\left[
   \frac{1}{2} {\rm dist}_{\cC}^{H^t}\Big(x^t-[H^t]^{-1}(\nabla f_t(x^t)+\sigma {\cal J}g(x^t)^Ty)\Big)^2
  \right]\right\},
\end{array}
\]
where $H^t:=\Theta^t_0+\alpha I$.
Therefore, when we derive the optimal solution $y^t$ by solving the dual problem,  from the duality theory, the solution of the subproblem (\ref{xna}) is given by
\[
x^{t+1}=\Pi_{\cC}^{H^t}\Big(x^t-[H^t]^{-1}(\nabla f_t(x^t)+\sigma {\cal J}g(x^t)^Ty^t)\Big).
\]

Based on the above analysis, OPMM for the online problem with convex constraints  can be rewritten as
in Algorithm \ref{algo:P-OPMM}.

\begin{algorithm2e}[!htp]\caption{
Projection version of  OPMM for online non-convex optimization with convex constraints.}\label{algo:P-OPMM}
  \SetKwInOut{Input}{Input}
  \Input{$\lambda^1=0$, $x^1 \in \cC$, $\sigma >0$ and $\alpha>0$, receive a cost function $f_1(\cdot)$. }
  \BlankLine
  \For{$t\leftarrow 1$ \KwTo $T$}{
  \emph{Choose $\Theta^t_0 \in \mathbb S^n$ and set $H^t:=\Theta^t_0+\alpha I$. Solve the following convex optimization problem to obtain $y^t$:}\
     \begin{equation}\label{eq:dual-sub}
    \begin{array}{lr}
   \max\limits_{y\geq 0}\left\{- \frac{\sigma}{2} \|y\|^2+\langle y, \lambda^t+\sigma g(x^t)\rangle- \frac{1}{2} \| \nabla f_t(x^t)+\sigma {\cal J}g(x^t)^Ty\|^2_{[H^t]^{-1}}\right.\\[10pt]
 \quad \quad \quad \quad \quad +\left.\left[
   \frac{1}{2} {\rm dist}_{\cC}^{H^t}\Big(x^t-[H^t]^{-1}(\nabla f_t(x^t)+\sigma {\cal J}g(x^t)^Ty)\Big)^2
  \right]\right\}.
\end{array}
\end{equation}

  \emph{Compute}\
 \begin{equation}\label{xnad}
 x^{t+1}=\Pi_{\cC}^{H^t}\Big(x^t-[H^t]^{-1}(\nabla f_t(x^t)+\sigma {\cal J}g(x^t)^Ty^t)\Big).
\end{equation}
	
  \emph{Update}\
\[
\lambda^{t+1}_i=\left[
\lambda^t_i+\sigma (g_i(x^t)+\langle \nabla g_i(x^t), x^{t+1}-x^t \rangle
)\right]_+,\,\, i=1,\ldots,p.
\]

  \emph{Receive a cost function $f_{t+1}(\cdot)$.}\
  }
\end{algorithm2e}

We now discuss the relationship between $y^t$ and $\lambda^{t+1}$.
\begin{proposition}\label{prop-rely}
Let $\omega_t (y)$ denote the objective function of Problem (\ref{eq:dual-sub}). Then,
\begin{equation}\label{eq:relylam}
\lambda^{t+1}=[\nabla \omega_t (y^t)+\sigma y^t]_+.
\end{equation}
\end{proposition}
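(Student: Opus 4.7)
The plan is to prove the identity by a direct term-by-term computation of $\nabla \omega_t(y^t)$, exploiting the smoothness of the squared weighted distance function established earlier in the excerpt (Lemma \ref{eq:lemP}) and the projection formula \eqref{xnad} defining $x^{t+1}$. The key observation is that, once $y^t$ is fixed, both $x^{t+1}$ and $\lambda^{t+1}$ are explicit functions of $y^t$, so the claimed identity becomes a bookkeeping exercise after differentiating $\omega_t$.

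More concretely, I would introduce the shorthand $u(y):= x^t-[H^t]^{-1}(\nabla f_t(x^t)+\sigma\,{\cal J}g(x^t)^T y)$, so that $\tfrac{\partial u}{\partial y}=-\sigma [H^t]^{-1}{\cal J}g(x^t)^T$ and, by \eqref{xnad}, $x^{t+1}=\Pi_{\cC}^{H^t}(u(y^t))$. Then I would differentiate the four pieces of $\omega_t$ separately: the linear term contributes $\lambda^t+\sigma g(x^t)$, the quadratic term contributes $-\sigma y$, the negative weighted-norm term gives $-\sigma {\cal J}g(x^t)[H^t]^{-1}(\nabla f_t(x^t)+\sigma {\cal J}g(x^t)^T y)=\sigma\,{\cal J}g(x^t)(u(y)-x^t)$, and finally, by Lemma \ref{eq:lemP} combined with the chain rule, the distance term contributes $-\sigma\,{\cal J}g(x^t)\bigl(u(y)-\Pi_{\cC}^{H^t}(u(y))\bigr)$. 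Summing the four gives
\[
\nabla\omega_t(y)=\lambda^t+\sigma g(x^t)-\sigma y+\sigma\,{\cal J}g(x^t)\bigl(\Pi_{\cC}^{H^t}(u(y))-x^t\bigr).
\]

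Evaluating this identity at $y=y^t$ and invoking $x^{t+1}=\Pi_{\cC}^{H^t}(u(y^t))$, I obtain
\[
\nabla\omega_t(y^t)+\sigma y^t=\lambda^t+\sigma g(x^t)+\sigma\,{\cal J}g(x^t)(x^{t+1}-x^t),
\]
whose $i$-th component is exactly the argument of $[\cdot]_+$ in the update rule for $\lambda^{t+1}_i$ given in Algorithm~\ref{algo:P-OPMM}. Taking the positive part coordinate-wise yields \eqref{eq:relylam}.

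The only delicate point is differentiating the distance-squared term; here the chain rule must be applied carefully because of the $H^t$-weighting. The weighting is, however, precisely what makes the two $H^t$ factors collapse: Lemma \ref{eq:lemP} produces an $H^t$ which is cancelled by the $[H^t]^{-1}$ coming from $\partial u/\partial y$, leaving the clean expression involving $\Pi_{\cC}^{H^t}(u(y))$. No optimality or KKT condition for the dual is needed, since the identity is a pure gradient computation valid for any $y^t$ that is used to define $x^{t+1}$ via \eqref{xnad}.
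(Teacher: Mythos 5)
Your proposal is correct and follows essentially the same route as the paper: a direct gradient computation of $\omega_t$ term by term, using Lemma \ref{eq:lemP} with the chain rule so that the $H^t$ from the distance gradient cancels the $[H^t]^{-1}$ from the inner derivative, then evaluating at $y^t$ and identifying $\Pi_{\cC}^{H^t}(u(y^t))$ with $x^{t+1}$ via (\ref{xnad}) to recover the argument of $[\cdot]_+$ in the multiplier update. Your remark that no dual optimality of $y^t$ is needed is also consistent with the paper's argument.
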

\begin{proof} It follows from Lemma \ref{eq:lemP} that
$$
\begin{array}{ll}
&\nabla \omega_t (y)\\[10pt] &= - \sigma y+\lambda^t+\sigma g(x^t)-\sigma {\cal J}g(x^t)\left[
[H^t]^{-1}(\nabla f_t(x^t)+\sigma {\cal J} g(x^t)^Ty)\right]\\[10pt]
&\quad \quad  -\sigma {\cal J}g(x^t)[H^t]^{-1}H^t\Big[x^t-[H^t]^{-1}(\nabla f_t(x^t)+\sigma {\cal J}g(x^t)^Ty)\\[10pt]
& \quad \quad -\Pi_{\cC}^{H^t}\Big(x^t-[H^t]^{-1}(\nabla f_t(x^t)+\sigma {\cal J}g(x^t)^Ty)\Big)\Big]\\[10pt]
&= - \sigma y+\lambda^t+\sigma g(x^t)  -\sigma {\cal J}g(x^t)\Big[x^t -\Pi_{\cC}^{H^t}\Big(x^t-[H^t]^{-1}(\nabla f_t(x^t)+\sigma {\cal J}g(x^t)^Ty)\Big)\Big].
\end{array}
$$
Hence, from (\ref{xnad}) we have
$$
\nabla \omega_t (y^t)=-\sigma y^t+\lambda^t+\sigma \Big[g(x^t)+{\cal J}g(x^t)(x^{t+1}-x^t)\Big],
$$
which implies
$$
\lambda^t+\sigma \Big[g(x^t)+{\cal J}g(x^t)(x^{t+1}-x^t)\Big]=\nabla \omega_t (y^t)+\sigma y^t.
$$
The claim is derived by noticing the definition of $\lambda^{t+1}$.
\end{proof}

Furthermore, if we choose a  scalar $\eta_t>0$ such that  $\Theta^t_0=\eta_t I$ satisfies the required assumptions, we obtain that $H^t=(\alpha +\eta_t)I$, $[H^t]^{-1}=(\alpha +\eta_t)^{-1}I$ and  Problem (\ref{eq:dual-sub}) is equivalent to
\begin{equation}\label{eq:dual-subI}
    \begin{array}{lr}
   \max\limits_{y\geq 0}\left\{- \frac{\sigma}{2} \|y\|^2+\langle y, \lambda^t+\sigma g(x^t)\rangle- \frac{1}{2(\alpha+\eta_t)} \| \nabla f_t(x^t)+\sigma {\cal J}g(x^t)^Ty\|^2\right.\\[10pt]
\quad \quad \quad \quad \quad +\left.\left[
   \frac{\alpha +\eta_t}{2} {\rm dist}\Big(x^t-[\alpha +\eta_t]^{-1}(\nabla f_t(x^t)+\sigma {\cal J}g(x^t)^Ty),\cC\Big)^2
  \right]\right\}.
\end{array}
    \end{equation}
Hence, the formula  (\ref{xnad}) is reduced to
\[
     x^{t+1}=\Pi_{\cC}\Big(x^t-[\alpha+\eta_t]^{-1}(\nabla f_t(x^t)+\sigma {\cal J}g(x^t)^Ty^t)\Big),
\]
where $y^t$ is the solution to Problem (\ref{eq:dual-subI}). In this case, at each iteration of Algorithm \ref{algo:P-OPMM} the main calculations are computing a projection and solving a relatively simple convex minimization problem which make the algorithm easy to be implemented.

\section{Conclusion}\label{sec:conclusion}
In this paper, we present a proximal method of multipliers with quadratic approximations (OPMM) for solving an online non-convex  optimization  with (possibly non-convex)  inequality  constraints. We  show that,  this algorithm exhibits  ${\cO}(T^{-1/8})$ Lagrangian residual  regret, ${\cO}(T^{-1/8})$ regret of constraint violation  and ${\cO}(T^{-1/4})$  complementarity residual regret if parameters in the algorithm are properly chosen, where $T$ denotes the number of iterations.
We also show that, for  the  case when  the constraint functions are all convex, the projection version of OPMM provides a practical way for finding a decision sequence  $\{x^1,x^2,\ldots, x^{T}\}$. To the best of our knowledge, the regret analysis of numerical methods for online non-convex optimization with long term constraints  has not been studied in the literature yet.

We note that, even for the simple bounded box set $\cC=[a,b] \subset \bR^n$,  the analysis of regrets requires the exact solution to the subproblem  (\ref{eq:dual-subI}). How to obtain the regret bounds when  the subproblem is inexactly solved is an important future research topic worth considering.
%

\backmatter
\bmhead{Acknowledgments}
The authors would like to thank  the two reviewers for the valuable suggestions. The research is supported
by  the National Natural Science Foundation of China (No. 11731013, No. 11971089  and No. 11871135).

\begin{appendices}

\section{Proof of Lemma \ref{lem:l7}}\label{app:lem}
\begin{proof}
Let
$$
r= \frac{\zeta}{4t_0\delta_{\max}^2}, \quad \rho=1- \frac{\zeta^2}{8\delta_{\max}^2},
$$
then it yields that
$
\rho=1- \frac{rt_0}{2}\zeta.
$
Define $\eta (t)=Z_{t+t_0}-Z_t$, then from $\lvert Z_{t+1}-Z_t\rvert\leq \delta_{\max}$ we have
$
\lvert\eta (t)\rvert\leq t_0 \delta_{\max}
$
and hence
\begin{equation}\label{eq:ineq2}
\lvert r \eta(t)\rvert \leq  \frac{\zeta}{4t_0\delta_{\max}^2} \cdot t_0 \delta_{\max} = \frac{\zeta}{4
\delta_{\max}} \leq 1.
\end{equation}
From (\ref{eq:ineq2}) and the following inequality
$$
e^{\tau} \leq 1+\tau+2 \tau^2 \mbox{ when }  \lvert\tau\rvert <1,
$$
we obtain
\[
\begin{array}{ll}
e^{rZ_{t+t_0}} & = e^{rZ_t}e^{r\eta(t)}\\[10pt]
& \leq e^{rZ_t} [1+r \eta (t)+2r^2 t_0^2\delta_{\max}^2]\\[10pt]
&= e^{rZ_t} [1+r \eta (t)+r t_0\zeta/2].
\end{array}
\]
Case 1: $Z_t\geq \theta$. In this case, one has from (\ref{eq:Y9}) that $\eta(t) \leq -t_0 \zeta$ and hence
\begin{equation}\label{eq:ineq4}
\begin{array}{ll}
e^{rZ_{t+t_0}}
& \leq e^{rZ_t} [1-rt_0 \zeta+r t_0\zeta/2]\\[8pt]
& = e^{rZ_t} [1-rt_0 \zeta/2]\\[8pt]
&=\rho e^{rZ_t}.
\end{array}
\end{equation}
Case 2: $Z_t< \theta$. In this case, one has  $\eta(t) \leq t_0 \delta_{\max}$ and hence
\begin{equation}\label{eq:ineq5}
\begin{array}{ll}
e^{rZ_{t+t_0}} & = e^{rZ_t}e^{r \eta (t)}\\[8pt]
&\leq e^{rZ_t}e^{r t_0 \delta_{\max}}\\[8pt]
&\leq e^{r\theta}e^{r t_0 \delta_{\max}}.
\end{array}
\end{equation}
Combining (\ref{eq:ineq4}) and (\ref{eq:ineq5}), we obtain
\begin{equation}\label{eq:ineq6}
e^{rZ_{t+t_0}} \leq \rho e^{rZ_t}+e^{r\theta}e^{r t_0 \delta_{\max}}.
\end{equation}

 We next prove the following inequality by induction,
\begin{equation}\label{eq:ineq7}
e^{rZ_t} \leq  \frac{1}{1-\rho}e^{r\theta}e^{r t_0 \delta_{\max}},\ t\in \{0,1,\ldots\}.
\end{equation}
We first consider the case $t \in \{0,1,\ldots, t_0\}$. From $\lvert Z_{t+1}-Z_t\rvert  \leq \delta_{\max}$ and $Z_0=0$ we have  $Z_t \leq t \delta_{\max}$. This, together with the fact that $\frac{e^{r\theta}}{1-\rho}\geq 1$, implies
$$
e^{rZ_t} \leq e^{rt\delta_{\max}}\leq e^{r t_0\delta_{\max}} \leq e^{r t_0\delta_{\max}} \frac{e^{r\theta}}{1-\rho}.
$$
Hence, (\ref{eq:ineq7}) is satisfied for all $t \in \{0,1,\ldots, t_0\}$.
We now assume that (\ref{eq:ineq7}) holds true
for all $t \in \{t_0+1,\ldots, \tau\}$ with arbitrary $\tau > t_0$. Consider $t=\tau+1$. By (\ref{eq:ineq6}), we have
$$
\begin{array}{ll}
e^{rZ_{\tau+1}}\leq \rho e^{rZ_{\tau+1-t_0}}+e^{r\theta}e^{r t_0 \delta_{\max}} \\[8pt]  \leq \rho  e^{r t_0\delta_{\max}} \frac{e^{r\theta}}{1-\rho}+e^{r\theta}e^{r t_0 \delta_{\max}} = e^{r t_0\delta_{\max}} \frac{e^{r\theta}}{1-\rho}.
\end{array}
$$
Therefore, the inequality (\ref{eq:ineq7}) holds for all $t\in \{0,1,\ldots\}$. Taking logarithm on both sides of (\ref{eq:ineq7})  and dividing by $r$ yields
$$
Z_t  \leq \theta +t_0 \delta_{\max} + \frac{1}{r} \log \left ( \frac{1}{1-\rho} \right)= \theta +t_0 \delta_{\max} + t_0 \frac{4 \delta_{\max}^2}{\zeta} \log \left ( \frac{8\delta_{\max}^2}{\zeta^2} \right).
$$
The proof is completed.
 \end{proof}




\end{appendices}

\bibliographystyle{sn-mathphys}
\bibliography{ref}


\begin{thebibliography}{30}
\ifx \bisbn   \undefined \def \bisbn  #1{ISBN #1}\fi
\ifx \binits  \undefined \def \binits#1{#1}\fi
\ifx \bauthor  \undefined \def \bauthor#1{#1}\fi
\ifx \batitle  \undefined \def \batitle#1{#1}\fi
\ifx \bjtitle  \undefined \def \bjtitle#1{#1}\fi
\ifx \bvolume  \undefined \def \bvolume#1{\textbf{#1}}\fi
\ifx \byear  \undefined \def \byear#1{#1}\fi
\ifx \bissue  \undefined \def \bissue#1{#1}\fi
\ifx \bfpage  \undefined \def \bfpage#1{#1}\fi
\ifx \blpage  \undefined \def \blpage #1{#1}\fi
\ifx \burl  \undefined \def \burl#1{\textsf{#1}}\fi
\ifx \doiurl  \undefined \def \doiurl#1{\url{https://doi.org/#1}}\fi
\ifx \betal  \undefined \def \betal{\textit{et al.}}\fi
\ifx \binstitute  \undefined \def \binstitute#1{#1}\fi
\ifx \binstitutionaled  \undefined \def \binstitutionaled#1{#1}\fi
\ifx \bctitle  \undefined \def \bctitle#1{#1}\fi
\ifx \beditor  \undefined \def \beditor#1{#1}\fi
\ifx \bpublisher  \undefined \def \bpublisher#1{#1}\fi
\ifx \bbtitle  \undefined \def \bbtitle#1{#1}\fi
\ifx \bedition  \undefined \def \bedition#1{#1}\fi
\ifx \bseriesno  \undefined \def \bseriesno#1{#1}\fi
\ifx \blocation  \undefined \def \blocation#1{#1}\fi
\ifx \bsertitle  \undefined \def \bsertitle#1{#1}\fi
\ifx \bsnm \undefined \def \bsnm#1{#1}\fi
\ifx \bsuffix \undefined \def \bsuffix#1{#1}\fi
\ifx \bparticle \undefined \def \bparticle#1{#1}\fi
\ifx \barticle \undefined \def \barticle#1{#1}\fi
\bibcommenthead
\ifx \bconfdate \undefined \def \bconfdate #1{#1}\fi
\ifx \botherref \undefined \def \botherref #1{#1}\fi
\ifx \url \undefined \def \url#1{\textsf{#1}}\fi
\ifx \bchapter \undefined \def \bchapter#1{#1}\fi
\ifx \bbook \undefined \def \bbook#1{#1}\fi
\ifx \bcomment \undefined \def \bcomment#1{#1}\fi
\ifx \oauthor \undefined \def \oauthor#1{#1}\fi
\ifx \citeauthoryear \undefined \def \citeauthoryear#1{#1}\fi
\ifx \endbibitem  \undefined \def \endbibitem {}\fi
\ifx \bconflocation  \undefined \def \bconflocation#1{#1}\fi
\ifx \arxivurl  \undefined \def \arxivurl#1{\textsf{#1}}\fi
\csname PreBibitemsHook\endcsname

\bibitem{MSF2017}
\begin{botherref}
\oauthor{\bsnm{M{\'{a}}rquez{-}Neila}, \binits{P.}},
\oauthor{\bsnm{Salzmann}, \binits{M.}},
\oauthor{\bsnm{Fua}, \binits{P.}}:
Imposing Hard Constraints on Deep Networks: Promises and Limitations
(2017).
\url{http://arxiv.org/abs/1706.02025}
\end{botherref}
\endbibitem

\bibitem{NPM2019}
\begin{bchapter}
\bauthor{\bsnm{Nandwani}, \binits{Y.}},
\bauthor{\bsnm{Pathak}, \binits{A.}},
\bauthor{\bsnm{Mausam}},
\bauthor{\bsnm{Singla}, \binits{P.}}:
\bctitle{A primal dual formulation for deep learning with constraints}.
In: \bbtitle{Advances in Neural Information Processing Systems 32},
pp. \bfpage{12157}--\blpage{12168}
(\byear{2019})
\end{bchapter}
\endbibitem

\bibitem{CJGWNYS2019}
\begin{barticle}
\bauthor{\bsnm{Cotter}, \binits{A.}},
\bauthor{\bsnm{Jiang}, \binits{H.}},
\bauthor{\bsnm{Gupta}, \binits{M.}},
\bauthor{\bsnm{Wang}, \binits{S.}},
\bauthor{\bsnm{Narayan}, \binits{T.}},
\bauthor{\bsnm{You}, \binits{S.}},
\bauthor{\bsnm{Sridharan}, \binits{K.}}:
\batitle{Optimization with non-differentiable constraints with applications to
  fairness, recall, churn, and other goals}.
\bjtitle{Journal of Machine Learning Research}
\bvolume{20}(\bissue{172}),
\bfpage{1}--\blpage{59}
(\byear{2019})
\end{barticle}
\endbibitem

\bibitem{SZSEEGF2014}
\begin{bchapter}
\bauthor{\bsnm{Szegedy}, \binits{C.}},
\bauthor{\bsnm{Zaremba}, \binits{W.}},
\bauthor{\bsnm{Sutskever}, \binits{I.}},
\bauthor{\bsnm{{Bruna Estrach}}, \binits{J.}},
\bauthor{\bsnm{Erhan}, \binits{D.}},
\bauthor{\bsnm{Goodfellow}, \binits{I.}},
\bauthor{\bsnm{Fergus}, \binits{R.}}:
\bctitle{Intriguing Properties of Neural Networks}.
(\byear{2014}).
\bcomment{2nd International Conference on Learning Representations (ICLR)}
\end{bchapter}
\endbibitem

\bibitem{Goodfellow2014}
\begin{bchapter}
\bauthor{\bsnm{Goodfellow}, \binits{I.}},
\bauthor{\bsnm{Pouget-Abadie}, \binits{J.}},
\bauthor{\bsnm{Mirza}, \binits{M.}},
\bauthor{\bsnm{Xu}, \binits{B.}},
\bauthor{\bsnm{Warde-Farley}, \binits{D.}},
\bauthor{\bsnm{Ozair}, \binits{S.}},
\bauthor{\bsnm{Courville}, \binits{A.}},
\bauthor{\bsnm{Bengio}, \binits{Y.}}:
\bctitle{Generative adversarial nets}.
In: \beditor{\bsnm{Ghahramani}, \binits{Z.}},
\beditor{\bsnm{Welling}, \binits{M.}},
\beditor{\bsnm{Cortes}, \binits{C.}},
\beditor{\bsnm{Lawrence}, \binits{N.D.}},
\beditor{\bsnm{Weinberger}, \binits{K.Q.}} (eds.)
\bbtitle{Advances in Neural Information Processing Systems 27},
pp. \bfpage{2672}--\blpage{2680}
(\byear{2014})
\end{bchapter}
\endbibitem

\bibitem{Kalai2005}
\begin{barticle}
\bauthor{\bsnm{Kalai}, \binits{A.}},
\bauthor{\bsnm{Vempala}, \binits{S.}}:
\batitle{Efficient algorithms for online decision problems}.
\bjtitle{J. Comput. System Sci.}
\bvolume{71}(\bissue{3}),
\bfpage{291}--\blpage{307}
(\byear{2005})
\end{barticle}
\endbibitem

\bibitem{Shai2007a}
\begin{botherref}
\oauthor{\bsnm{Shalev-Shwartz}, \binits{S.}}:
Online learning: Theory, algorithms, and applications.
PhD thesis,
The Hebrew University
(2007)
\end{botherref}
\endbibitem

\bibitem{Shai2007b}
\begin{barticle}
\bauthor{\bsnm{Shalev-Shwartz}, \binits{S.}},
\bauthor{\bsnm{Singer}, \binits{Y.}}:
\batitle{A primal-dual perspective of online learning algorithms}.
\bjtitle{Machine Learning}
\bvolume{69}(\bissue{2}),
\bfpage{115}--\blpage{142}
(\byear{2007})
\end{barticle}
\endbibitem

\bibitem{Kiv1997}
\begin{barticle}
\bauthor{\bsnm{Kivinen}, \binits{J.}},
\bauthor{\bsnm{Warmuth}, \binits{M.K.}}:
\batitle{Exponentiated gradient versus gradient descent for linear predictors}.
\bjtitle{Inform. and Comput.}
\bvolume{132}(\bissue{1}),
\bfpage{1}--\blpage{63}
(\byear{1997})
\end{barticle}
\endbibitem

\bibitem{Rosenblatt1958}
\begin{barticle}
\bauthor{\bsnm{Rosenblatt}, \binits{F.}}:
\batitle{The perceptron: A probabilistic model for information storage and
  organization in the brain.}
\bjtitle{Psychological Review}
\bvolume{65}(\bissue{6}),
\bfpage{386}--\blpage{408}
(\byear{1958})
\end{barticle}
\endbibitem

\bibitem{Littlestone1988}
\begin{barticle}
\bauthor{\bsnm{Littlestone}, \binits{N.}}:
\batitle{Learning quickly when irrelevant attributes abound: A new
  linear-threshold algorithm}.
\bjtitle{Machine Learning}
\bvolume{2}(\bissue{4}),
\bfpage{285}--\blpage{318}
(\byear{1988})
\end{barticle}
\endbibitem

\bibitem{MRTal2012}
\begin{bbook}
\bauthor{\bsnm{Mohri}, \binits{M.}},
\bauthor{\bsnm{Rostamizadeh}, \binits{A.}},
\bauthor{\bsnm{Talwalkar}, \binits{A.}}:
\bbtitle{Foundations of Machine Learning}.
\bsertitle{Adaptive Computation and Machine Learning}.
\bpublisher{MIT Press},
\blocation{Cambridge, MA}
(\byear{2012})
\end{bbook}
\endbibitem

\bibitem{SSS2014}
\begin{bbook}
\bauthor{\bsnm{Shalev-Shwartz}, \binits{S.}},
\bauthor{\bsnm{Ben-David}, \binits{S.}}:
\bbtitle{Understanding Machine Learning: From Theory to Algorithms}.
\bpublisher{Cambridge University Press},
\blocation{New York, NY}
(\byear{2014})
\end{bbook}
\endbibitem

\bibitem{Shai2011}
\begin{barticle}
\bauthor{\bsnm{Shalev-Shwartz}, \binits{S.}}:
\batitle{Online learning and online convex optimization}.
\bjtitle{Foundations and Trends{\textregistered} in Machine Learning}
\bvolume{4}(\bissue{2}),
\bfpage{107}--\blpage{194}
(\byear{2011})
\end{barticle}
\endbibitem

\bibitem{Hazan2015}
\begin{barticle}
\bauthor{\bsnm{Hazan}, \binits{E.}}:
\batitle{Introduction to online convex optimization}.
\bjtitle{Foundations and Trends{\textregistered} in Optimization}
\bvolume{2}(\bissue{3-4}),
\bfpage{157}--\blpage{325}
(\byear{2015})
\end{barticle}
\endbibitem

\bibitem{EBG2011}
\begin{barticle}
\bauthor{\bsnm{{Ertekin}}, \binits{S.}},
\bauthor{\bsnm{{Bottou}}, \binits{L.}},
\bauthor{\bsnm{{Giles}}, \binits{C.L.}}:
\batitle{Nonconvex online support vector machines}.
\bjtitle{IEEE Transactions on Pattern Analysis and Machine Intelligence}
\bvolume{33}(\bissue{2}),
\bfpage{368}--\blpage{381}
(\byear{2011})
\end{barticle}
\endbibitem

\bibitem{GPSB2011}
\begin{botherref}
\oauthor{\bsnm{Gasso}, \binits{G.}},
\oauthor{\bsnm{Pappaioannou}, \binits{A.}},
\oauthor{\bsnm{Spivak}, \binits{M.}},
\oauthor{\bsnm{Bottou}, \binits{L.}}:
Batch and online learning algorithms for nonconvex neyman-pearson
  classification.
ACM Trans. Intell. Syst. Technol.
\textbf{2}(3)
(2011)
\end{botherref}
\endbibitem

\bibitem{GaoZZhang2018}
\begin{bchapter}
\bauthor{\bsnm{Gao}, \binits{X.}},
\bauthor{\bsnm{Li}, \binits{X.}},
\bauthor{\bsnm{Zhang}, \binits{S.}}:
\bctitle{Online learning with non-convex losses and non-stationary regret}.
In: \beditor{\bsnm{Storkey}, \binits{A.}},
\beditor{\bsnm{Perez-Cruz}, \binits{F.}} (eds.)
\bbtitle{Proceedings of the Twenty-First International Conference on Artificial
  Intelligence and Statistics}.
\bsertitle{Proceedings of Machine Learning Research},
vol. \bseriesno{84},
pp. \bfpage{235}--\blpage{243}.
\bconflocation{Playa Blanca, Lanzarote, Canary Islands}
(\byear{2018})
\end{bchapter}
\endbibitem

\bibitem{HSZ2017}
\begin{bchapter}
\bauthor{\bsnm{Hazan}, \binits{E.}},
\bauthor{\bsnm{Singh}, \binits{K.}},
\bauthor{\bsnm{Zhang}, \binits{C.}}:
\bctitle{Efficient regret minimization in non-convex games}.
In: \beditor{\bsnm{Precup}, \binits{D.}},
\beditor{\bsnm{Teh}, \binits{Y.W.}} (eds.)
\bbtitle{Proceedings of the 34th International Conference on Machine Learning}.
\bsertitle{Proceedings of Machine Learning Research},
vol. \bseriesno{70},
pp. \bfpage{1433}--\blpage{1441}.
\bconflocation{International Convention Centre, Sydney, Australia}
(\byear{2017})
\end{bchapter}
\endbibitem

\bibitem{LH2020}
\begin{barticle}
\bauthor{\bsnm{Le~Thi}, \binits{H.A.}},
\bauthor{\bsnm{Ho}, \binits{V.T.}}:
\batitle{Online learning based on online dca and application to online
  classification}.
\bjtitle{Neural Computation}
\bvolume{32}(\bissue{4}),
\bfpage{759}--\blpage{793}
(\byear{2020})
\end{barticle}
\endbibitem

\bibitem{YDHTW2018}
\begin{bchapter}
\bauthor{\bsnm{Yang}, \binits{L.}},
\bauthor{\bsnm{Deng}, \binits{L.}},
\bauthor{\bsnm{Hajiesmaili}, \binits{M.H.}},
\bauthor{\bsnm{Tan}, \binits{C.}},
\bauthor{\bsnm{Wong}, \binits{W.S.}}:
\bctitle{An optimal algorithm for online non-convex learning}.
In: \bbtitle{Abstracts of the 2018 ACM International Conference on Measurement
  and Modeling of Computer Systems}.
\bsertitle{SIGMETRICS ’18},
pp. \bfpage{41}--\blpage{43},
\bconflocation{New York, NY, USA}
(\byear{2018})
\end{bchapter}
\endbibitem

\bibitem{AgarwalHazn2019}
\begin{bchapter}
\bauthor{\bsnm{Agarwal}, \binits{N.}},
\bauthor{\bsnm{Gonen}, \binits{A.}},
\bauthor{\bsnm{Hazan}, \binits{E.}}:
\bctitle{Learning in non-convex games with an optimization oracle}.
In: \beditor{\bsnm{Beygelzimer}, \binits{A.}},
\beditor{\bsnm{Hsu}, \binits{D.}} (eds.)
\bbtitle{Proceedings of the Thirty-Second Conference on Learning Theory}.
\bsertitle{Proceedings of Machine Learning Research},
vol. \bseriesno{99},
pp. \bfpage{18}--\blpage{29}.
\bconflocation{Phoenix, USA}
(\byear{2019})
\end{bchapter}
\endbibitem

\bibitem{SN2019}
\begin{botherref}
\oauthor{\bsnm{Suggala}, \binits{A.S.}},
\oauthor{\bsnm{Netrapalli}, \binits{P.}}:
Online Non-Convex Learning: Following the Perturbed Leader is Optimal.
https://arxiv.org/abs/1903.08110
(2019)
\end{botherref}
\endbibitem

\bibitem{RBGM2019}
\begin{botherref}
\oauthor{\bsnm{Roy}, \binits{A.}},
\oauthor{\bsnm{Balasubramanian}, \binits{K.}},
\oauthor{\bsnm{Ghadimi}, \binits{S.}},
\oauthor{\bsnm{Mohapatra}, \binits{P.}}:
Multi-Point Bandit Algorithms for Nonstationary Online Nonconvex Optimization.
https://arxiv.org/abs/1907.13616
(2019)
\end{botherref}
\endbibitem

\bibitem{MJY2012}
\begin{barticle}
\bauthor{\bsnm{Mahdavi}, \binits{M.}},
\bauthor{\bsnm{Jin}, \binits{R.}},
\bauthor{\bsnm{Yang}, \binits{T.}}:
\batitle{Trading regret for efficiency: online convex optimization with long
  term constraints}.
\bjtitle{J. Mach. Learn. Res.}
\bvolume{13},
\bfpage{2503}--\blpage{2528}
(\byear{2012})
\end{barticle}
\endbibitem

\bibitem{JHA2016}
\begin{bchapter}
\bauthor{\bsnm{Jenatton}, \binits{R.}},
\bauthor{\bsnm{Huang}, \binits{J.}},
\bauthor{\bsnm{Archambeau}, \binits{C.}}:
\bctitle{Adaptive algorithms for online convex optimization with long-term
  constraints}.
In: \bbtitle{Proceedings of The 33rd International Conference on Machine
  Learning}.
\bsertitle{Proceedings of Machine Learning Research},
vol. \bseriesno{48},
pp. \bfpage{402}--\blpage{411}.
\bconflocation{New York, New York, USA}
(\byear{2016})
\end{bchapter}
\endbibitem

\bibitem{YNeely2016}
\begin{barticle}
\bauthor{\bsnm{Yu}, \binits{H.}},
\bauthor{\bsnm{Neely}, \binits{M.J.}}:
\batitle{A low complexity algorithm with {$O(\sqrt T)$} regret and {$O(1)$}
  constraint violations for online convex optimization with long term
  constraints}.
\bjtitle{J. Mach. Learn. Res.}
\bvolume{21},
\bfpage{1}--\blpage{24}
(\byear{2020})
\end{barticle}
\endbibitem

\bibitem{Rockafellar76A}
\begin{barticle}
\bauthor{\bsnm{Rockafellar}, \binits{R.T.}}:
\batitle{Augmented {L}agrangians and applications of the proximal point
  algorithm in convex programming}.
\bjtitle{Math. Oper. Res.}
\bvolume{1}(\bissue{2}),
\bfpage{97}--\blpage{116}
(\byear{1976})
\end{barticle}
\endbibitem

\bibitem{YNeely2017}
\begin{bchapter}
\bauthor{\bsnm{Yu}, \binits{H.}},
\bauthor{\bsnm{Neely}, \binits{M.J.}},
\bauthor{\bsnm{Wei}, \binits{X.}}:
\bctitle{Online Convex Optimization with Stochastic Constraints}.
In: \bbtitle{Advances in Neural Information Processing Systems},
pp. \bfpage{1428}--\blpage{1438}
(\byear{2017})
\end{bchapter}
\endbibitem

\bibitem{Beck2017}
\begin{bbook}
\bauthor{\bsnm{Beck}, \binits{A.}}:
\bbtitle{First-order Methods in Optimization}.
\bsertitle{MOS-SIAM Series on Optimization},
vol. \bseriesno{25}.
\bpublisher{Society for Industrial and Applied Mathematics (SIAM)},
\blocation{Philadelphia, PA}
(\byear{2017})
\end{bbook}
\endbibitem

\end{thebibliography}


\end{document}